\newtheoremstyle{mystyle}{}{}{\rmfamily}%
{}{\normalfont\bfseries}{.}{ }{} 
\newtheorem{theorem}{Theorem}[section]
\newtheorem{corollary}[theorem]{Corollary}
\newtheorem{lemma}[theorem]{Lemma}
\newtheorem{proposition}[theorem]{Proposition}
\theoremstyle{mystyle}
\newtheorem{remark}[theorem]{Remark}
\newcommand{\cB}{{\mathcal B}}
\newcommand{\cF}{{\mathcal F}}
\newcommand{\cI}{{\mathcal I}}
\newcommand{\cN}{{\mathcal N}}
\newcommand{\cW}{{\mathcal W}}
\newcommand{\cZ}{{\mathcal Z}}
\newcommand{\te}{{\theta}}
\newcommand{\Om}{{\Omega}}
\newcommand{\om}{{\omega}}
\newcommand{\al}{{\alpha}}
\newcommand{\ka}{{\kappa}}
\newcommand{\la}{{\lambda}}
\newcommand{\La}{{\Lambda}}
\newcommand{\bbN}{{\mathbb N}}
\newcommand{\bbR}{{\mathbb R}}
\newcommand{\with}{:\;}
\newcommand{\myfrac}[2]{\genfrac{}{}{0pt}{}{#1}{#2}}
\newcommand{\1}{\mathbf{1}}
\long\def\symbolfootnote[#1]#2{\begingroup\def\thefootnote{\fnsymbol{footnote}}
\footnote[#1]{#2}\endgroup}
\begin{document}  

\bigskip
\bigskip

\begin{center}
\textsc{\textbf{\Large On random topological Markov chains with big images and preimages}}

\bigskip \bigskip 
{\textsc{ Manuel Stadlbauer}}

\medskip
\textit{\small Departamento de Matem\'atica, Universidade Federal da Bahia, 
Av. Ademar de Barros s/n, 40170-110 Salvador, BA, Brasil
E-mail: {manuel.stadlbauer@ufba.br}\\
{\today}}
\symbolfootnote[0]{ {}\\[-8pt]
Corrected and extended version of the article published as Stochastics and Dynamics 10 (1), 2010, 77-95, DOI: \href{http://dx.doi.org/10.1142/S0219493710002863}{10.1142/S0219493710002863} © World Scientific Publishing Company, \href{http://www.worldscinet.com/sd/}{http://www.worldscinet.com/sd/}.
}

\medskip
\textit{Dedicated to Manfred Denker on the Occasion of His 65th Birthday}\end{center}

\begin{quote} \textit{Abstract.} We introduce a relative notion of the “big images and preimages”-property for random topological Markov chains. This condition then implies that a relative version of the Ruelle-Perron-Frobenius theorem holds with respect to summable and locally Hölder continuous potentials.

\textit{Keywords:} Random countable Markov shift; random bundle transformation; Ruelle– Perron–Frobenius theorem; Markov chains with random transitions; finite primitivity; big images and preimages.

{AMS Subject Classification:}  37D35, 37H99
\end{quote}

\section{Introduction} In this note we give a further contribution to the extension of thermodynamic formalism for topological Markov chains to random transformations and, in particular, obtain a sufficient condition for the existence of random conformal measures and random eigenfunctions of the Ruelle operator which applies e.g. to a random full shift with countably many states. In particular, we obtain an extension of the results for random subshifts of finite type obtained by Bogenschütz, Gundlach and Kifer (\cite{BogenschutzGundlach:1995,Kifer:1996a,Kifer:2008}) to random shift spaces with countably many states. For illustration, we also give applications to countable random matrices, that is, we deduce a Perron-Frobenius theorem and a sufficient condition for the existence of a stationary distribution for a countable-state Markov chain with random transition probabilities.

For deterministic dynamical systems the following results are known. Recall that
it was shown by Sarig (\cite{Sarig:1999}) that the Ruelle-Perron-Frobenius theorem extends to deterministic topological Markov chains with countably many states and locally Hölder continuous potentials if and only if the system is positive recurrent. If the potential is summable, results in this direction are obtained by imposing topological mixing conditions, `finite irreducibility' or `finite primitivity', on the shift space (see \cite{MauldinUrbanski:2001,StratmannUrbanski:2007}). Furthermore, if the topological Markov chain is topologically mixing, then these conditions coincide with the `big images and preimages'-property introduced in \cite{Sarig:2003} where it is shown that this condition is equivalent to positive recurrence for summable potentials. Note that these results are advantageous in many applications since they can be, in contrast to positive recurrence, verified easily. 

The goal of this paper is to obtain an extension of these results to random bundle transformations, that is we consider a commuting diagram (or fibered system)   
\[  \begin{array}{rll}
 X  &\stackrel{T}\longrightarrow  &X     \\
 {\scriptstyle \pi} \downarrow \,& &\downarrow  {\scriptstyle \pi}\\
\Om & \stackrel{\te}\longrightarrow &\Om,
\end{array} \]
where $\te$ is an ergodic automorphism of the abstract probability space $(\Om,P)$ and $\pi$ is onto and measurable. 
With $X_\om$ referring to $\pi^{-1}(\{\om\})$, the  restriction $T_\om: X_\om \to X_{\te \om}$ of $T$ to fibers then has a natural interpretation as a random transformation in a random environment. In here, we consider the class of random topological Markov chains, that is,  $X$ is a subset of $\bbN^\bbN\times \Om$ such that each fiber $X_\om$ has a random Markov structure (for details, see Section \ref{sec:2}).

For the extension of the notion of big images and preimages (b.i.p.) to this setting, we only require that a corresponding property holds for returns to subsets $\Om_{\textrm{\tiny bi}}$ and $\Om_{\textrm{\tiny bp}}$ of positive measure in the base $\Om$. That is, for $\om \in \Om_{\textrm{\tiny bi}}$, there exists a finite union of cylinders $F_{\te\om} \subset X_{\te \om}$ such that $T_\om([a]) \cap  F_{\te\om} \neq \emptyset$ for all cylinders $[a]\subset X_{\om}$ (\emph{big images}) and, for $\om \in \Om_{\textrm{\tiny bp}}$, there exists a finite union of cylinders $F'_{\te^{-1} \om}\subset X_{\te^{-1} \om }$ such that $T_{\te^{-1} \om}(F'_{\te^{-1} \om}) = X_\om$ (\emph{big preimages}), respectively. Note that this property is a purely topological property with respect to the fibers.

We then consider topologically mixing systems equipped with a potential $\phi$ which is locally Hölder continuous in the fibers.
Our further analysis relies on the divergence at the radius of convergence of a random power series whose coefficients are given by random partition functions. Systems with this property will be called of divergence type. As a first result we obtain in Theorem \ref{theo:ergodic} that a  system with summable potential and the b.i.p.-property is of divergence type.

For systems of divergence type with summable potential, it then follows that a random conformal measure exists (Theorem \ref{theo:conformal}). That is, there exists a family of probability measures $\{\mu_\om\}$ and a positive random variable $\la: \Om \to \bbR$ such that, for $x \in X_\om$, 
\[ {d\mu_{\te \om}\circ T_\om}/{d\mu_\om}(x) = \la(\om) e^{P_G(\phi)-\phi(x)}\]
where $P_G(\phi)$ refers to the relative  Gurevi\v{c} pressure as introduced in \cite{DenkerKiferStadlbauer:2008}. The proof of this statement relies on the 
construction of $\la$ as the limit of the quotient of random power series and the application of 
Crauel's random Prohorov theorem (see \cite{Crauel:2002}) to a family of random measures. Note that 
the construction of this family of random measures is an adaption of the construction in \cite{DenkerKiferStadlbauer:2008}. However, it turns out that the summability assumption significantly simplifies the tightness argument compared to the proof in there.
   
In particular, this result gives that $\la e^{P_G(\phi)}$ is the spectral radius of the dual of the random Ruelle operator. For systems with the b.i.p.-property, the identification of $\la$ as  quotient of random power series then gives rise to application of results in \cite{DenkerKiferStadlbauer:2008}, that is the system is positive recurrent and a relative version of the Ruelle-Perron-Frobenius theorem holds (Corollary \ref{cor:pr} and Theorem \ref{theo:rpf}). As immediate consequences of these results, we obtain a Perron-Frobenius theorem for random matrices (Corollary \ref{cor:pf}) and an application to random stochastic matrices (Corollary \ref{theo:stochastic:matrix}). 

\bigskip
\setlength{\fboxsep}{8pt}
\setlength{\fboxrule}{1pt}
\noindent\fbox{\parbox{0.952\textwidth}{This note is an extended and corrected version of \cite{Stadlbauer:2010}. It now contains a correct statement and extended proof of theorem \ref{theo:conformal}, a remark on the left-out statement in theorem \ref{theo:conformal} (remark  \ref{rem:missing_statement}), and an alternative, simplified proof of corollary \ref{cor:pr} which is independent from that statement. Furthermore, the line before corollary \ref{cor:pr} was deleted.}}

\section{Preliminaries} \label{sec:2}

Let  $\te$ be an  automorphism (i.e. bimeasurable, invertible and probability preserving) of the probability space $(\Om,\cF,P)$, $\ell=\ell_\om>1$ be a  $\bbN\cup\{\infty\}$-valued random variable and, for a.e. $\om \in \Om$, let   
$A_\om=\big(\al_{ij}(\om),\, i < \ell_\om,j < \ell_{\te\om}\big)$ be a matrix
with entries $\alpha_{ij}(\om)\in\{ 0,1\}$ such that $\om \mapsto A_\om$ is measurable and $\sum_{j < \ell_{\te\om}}{a_{ij}(\om)}>0$ for all $i < \ell_\om$.
For the random shift spaces 
\[
X_\om=\{ x=(x_0,x_1,...):\, \al_{x_ix_{i+1}}(\te^i\om)=1\,\,\forall i=0,1,...
\},
\]
the (random) shift map $T_\om :X_\om\to X_{\te\om}$ is defined by $T_\om :(x_0,x_1,x_2...)=
(x_1,x_2,...)$. This gives rise to a globally defined map $T$ of $X$ where $X:=\{(\om,x):\, x\in X_\om\}$ and  $T(\om,x)=(\te\om,T_\om x)$. In this situation, the pair $(X,T)$ is referred to as a \emph{random countable topological Markov chain}.
For $n \in \bbN$, set $T^n_\om=T_{\te^{n-1}\om}\circ \cdots \circ T_{\te\om} \circ T_\om$ and note that $T^n(\om,x)=(\te^n\om,T^n_\om x)$.

A finite word $a=(x_0, x_1, \ldots,x_{n-1}) \in \bbN^n$ of length $n$ is called \emph{$\om$-admissible}, if $x_i < \ell_{\te^i \om}$ and $\al_{x_ix_{i+1}}(\te^i\om)=1$, for $i=0, \ldots , n-1$. In here, $\cW^n_\om$ denotes the set  of $\om$-admissible words of length $n$ (in particular, $\cW^1_\om= \{ a:a<\ell_\om\}$) and, for $a=(a_0, a_1, \ldots,a_{n-1}) \in \bbN^n$,  
\[[a]_\om = [a_0,a_1,...,a_{n-1}]_\om :=\{ x\in X_\om:\, x_i=a_i,\, i=0,1,...,n-1\}\]
is called \emph{cylinder set}. The set of those $\om \in \Om$ where the cylinder is nonempty will be denoted by $\Om_a$, that is 
\[\Om_a =\{\om \with [a]_\om\ne\emptyset\} = \{\om: a \in \cW^n_\om\}.\]
Finally, $\cW^n$ refers to the set of words $a$ of length $n$ with $P(\Om_a)>0$.
In this paper, we exclusively consider \emph{topologically mixing} random topological Markov chains. That is, for 
$a,b\in\cW^1$, there exists a $\bbN$-valued random variable $N_{ab}=
N_{ab}(\om)$ such that, for $n\geq N_{ab}(\om)$, $a \leq \cW^1_\om$ and
$\te^n\om \in \Om_b$, we have that $[a]_\om\cap(T^n_\om)^{-1}[b]_{\te^n\om}\ne 
\emptyset$.

As mentioned above we are interested in thermodynamic aspects of random topological Markov chains with respect to locally  Hölder continuous potentials. Therefore, recall that, for a function $\phi:\, X\to\bbR$, $(\om,x) \mapsto \phi^\om(x)$, the $n$-th variation is defined by
\[V^\om_n(\phi)=\sup\{ |\phi^\om(x)-\phi^\om(y)|:\, x_i=y_i,\, i=0,1,\ldots,
n-1\}.\]
The function $\phi$ is referred to as a \emph{locally fiber H\" older continuous function with index $k\in \bbN$} if there exists
a random variable $\ka=\ka(\om)\geq 1$ such that $\int\log\ka dP<\infty$ and,
for all $n\geq k$, $V^\om_n(\phi)\leq\ka(\om)r^n$. For abbreviation, 
such a function will be referred to as a \emph{$k$-Hölder continuous} function. This then leads to the following elementary but useful estimate. For $n \leq m$, $x,y \in [a]_\om$ for some $a \in \cW_\om^m$, and a $(m-n+1)$-Hölder continuous function $\phi$,  
\begin{align*}
|\phi_n^\om (x)-\phi_n^\om (y)| & \leq \sum_{k=0}^{n-1}| \phi^{\te^k\om}(T_\om^k(x)) - \phi^{\te^k\om}(T_\om^k(y))| \leq \sum_{k=0}^{n-1} V_{m-k}^{\te^k \om}(\phi)\\
  & \leq \sum_{k=0}^{n-1} \ka(\te^k \om)r^{m-k}  
 \leq \sum_{k>m-n} \ka(\te^{m-k}\om) r^{k} = r^{m-n}\sum_{k=1}^\infty \ka(\te^{n-k}\om) r^k. 
\end{align*}
Since $\log \ka \in L^1(P)$, we obtain $(\log\ka)/n \to 0$ as a consequence of the ergodic theorem. So the radius of convergence of the series on the right hand side of the above estimate is equal to 1 and, in particular, the right hand side  is finite. 
 Hence,  for a locally fiber H\" older continuous function with index less than or equal to $(m-n+1)$,  
\begin{equation}\label{eq:key} (B_{\te^n \om})^{-1} \leq  (B_{\te^n \om})^{-r^{m-n}} \leq e^{\phi_n^\om (x)-\phi_n^\om (y)} \leq  (B_{\te^n \om})^{r^{m-n}} \leq  B_{\te^n \om}\end{equation}
where $B_\om:= \exp \sum_{k=1}^\infty \kappa(\te^{-k}\om)r^k$. {Note that this definition differs from the one in \cite{DenkerKiferStadlbauer:2008} by the choice of the element in the base - in here we replaced $\om$ by $\te^n\om$.} 
A further basic notion is the \emph{(random) Ruelle operator} $L_\phi$ associated to a potential (function) $\phi =(\phi^\om):X \to \bbR$ which is defined by, for a function $f:X \to \bbR$,
\[
L^\om_\phi f(\te\om, x)=\sum_{y\in X_\om,T_\om y=x}e^{\phi^\om(y)}f(\om,y).
\]

In this note, we consider potentials $\phi$ satisfying some of the following additional assumptions.
\begin{itemize}
  \item[\textrm{(H1)}] The potential $\phi$ is $1$-H\" older continuous and $\int\log B_\om dP(\om) < \infty$.  
  \item[\textrm{(H2)}] The potential $\phi$ is $2$-H\" older continuous and $\int\log B_\om dP(\om) < \infty$. 
  \item[\textrm{(S1)}] 
  $\int \log M_\om dP(\om)<\infty$ where $M_\om :=  \sup \{L^\om_\phi(1)(x):x \in X_{\te\om}\}$. 
  \item[\textrm{(S2)}] 
  $\int \log m_\om  dP(\om)> - \infty$ where  $m_\om:= \inf \{L^\om_\phi(1)(x):x \in X_{\te\om}\}$. 
\end{itemize}
These assumptions might be seen as randomized versions of Hölder continuous (H1-2) and summable potentials (S1-2), respectively. 
Recall that, if $\phi$ is locally fiber H\" older continuous and $\ka$ is integrable, then (H1) holds (see \cite{DenkerKiferStadlbauer:2008}).  Also note that (S1-2) is equivalent to  $\|\log L^\om_\phi(1)\| \in L^1(P)$. Below, after introducing big images and preimages, we will give a further Hölder condition (H$^\ast$)  for which (H2) holds and  $1$-H\" older continuity is only required on a subset of $\Om$.

\section{Partition functions and big images and preimages} \label{sec:3}

In this section, we introduce the notion of big images and preimages for random topological Markov chains. Moreover, we discuss immediate consequences in terms of estimates for the random version of the Gurevi\v{c} partition functions. These estimates will then be used to prove that the preimage function diverges at its radius of convergence (Theorem \ref{theo:ergodic}).

In order to define the relevant objects, we introduce the following notation. For $a,b \in \cW^1$, $\om \in \Om_a$ and $n \in \bbN$, set 
\begin{align*} \cW_\om^n(a,b) := \{(w_0, \ldots, w_{n-1})\in \cW^n_\om \with w_0=a, w_{n-1}b \in \cW^{2}_{\te^{n-1}\om}\}.\end{align*}
Moreover, for $w \in \cW^n_\om$, and $m \leq n$, set $\exp(\phi_m^\om([w])):= \sup\{\exp(\phi_m^\om(x))\with x \in [w]_\om\}$. As an immediate consequence of (\ref{eq:key}) we have, for a $k$-Hölder continuous potential $\phi$, 
\begin{equation}\label{eq:finite} 0 < \inf\{\exp(\phi_{n-k+1}^\om(x))\with x \in [w]_\om\} \leq \exp(\phi_{n-k+1}^\om([w]))< \infty \hbox{ a.s.}\end{equation} 

We now consider a fixed topologically mixing random Markov chain $(X,T)$, a potential $\phi$ satisfying (H2)  and $a \in \cW^1$. For $\om \in \Om_a$ and $n \in \bbN$, the \emph{$n$-th (random) Gurevi\v{c} partition function} is defined by 
\[Z_n^\om(a) := \sum_{w \in \cW_\om^n(a,a)} e^{\phi_{n}^\om([wa])}, \]
where we use the convention that  $Z_n^\om(a)=0$ if $ \cW_\om^n(a,a) = \emptyset$. 
Note that this definition differs from the one in \cite{DenkerKiferStadlbauer:2008}. In here, $\phi_{n}^\om([w])$ is replaced by $\phi_{n}^\om([wa])$ in order to obtain a partition function applicable to $2$-Hölder continuous potentials.
Since $(X,T)$ is topologically mixing, it follows that $Z_n^\om(a)>0$ for all $n \geq N_{aa}(\om)$ with $\te^n\om \in \Om_a$.  
Furthermore, given a measurable family $\{\xi_\om \in [a]_\om : \om \in \Om\}$, the \emph{$n$-th local preimage function} is defined by 
\[\cZ_n^\om(a) := \sum_{w \in \cW_\om^n(a,a)} e^{\phi_{n}^\om(\tau_w(\xi_{\te^n\om}))} = L_\phi^{\om,n}(1_{[a]})(\xi_{\te^n\om})\] 
where $\tau_w$ refers to the inverse branch $T_\om^n([w]_\om) \to [w]_\om$. In particular, if $\phi$ is $2$-Hölder, then   (\ref{eq:key}) implies that $Z_n^\om(a) \geq \cZ_n^\om(a) \geq Z_n^\om(a) B_{\te^n\om}^{-1}$. 
Moreover, the  \emph{$n$-th preimage function} is defined by, for $\om \in \Om$,
\[\cZ_n^\om := \sum_{w \in \cW_\om^n} e^{\phi_{n}^\om(\tau_w(\xi_{\te^n\om}))} = L_\phi^{\om,n}(1)(\xi_{\te^n\om}).\]
As a consequence of (S1), we have $\cZ_n^\om \leq M_\om \cdots M_{\te^{n-1}\om}< \infty$.
 Finally, set    
\[A_n^\om := \sum_{w \in \cW_\om^n} e^{\phi_{n}^\om([w])}\]
and note that $0 < A_n^\om \leq \infty$. We now introduce 
the \emph{relative Gurevi\v{c} pressure} $P_G(\phi)$ adapted to the situation under consideration. 
 For $\Om' \subset \Om$ and $\om \in \Om$, set $J_\om({\Om'}) := \{ n \in \bbN \with \te^n\om \in \Om' \}$ and choose  $N\in \bbN$ such that $\Om^\ast:= \{\om \in \Om_a \with N_{aa}(\om) \leq N \}$ is a set of positive measure. The following proposition is a slight generalization of Theorem 3.2 in \cite{DenkerKiferStadlbauer:2008} to $2$-Hölder continuous (H2) and summable (S1) potentials. 
 
\begin{proposition}\label{prop:pressure} For a mixing system $(X,T)$ and a potential satisfying (H2) and (S1), the limits
  \[ P_G(\phi):= \lim_{\myfrac{n \to \infty,}{n \in J_\om({\Om^\ast})}} \frac{1}{n} \log Z_n^\om(a) =  \lim_{\myfrac{n \to \infty,}{n \in J_\om({\Om^\ast})}} \frac{1}{n} \log \cZ_n^\om(a) \geq -\infty\]
exist, are a.s. constant with respect to $\om$ and independent of the choice of $a$ and $N$.
\end{proposition}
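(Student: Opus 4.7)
The plan is to establish the proposition in four steps: equivalence of the two limits, existence of $\lim\tfrac{1}{n}\log Z_n^\om(a)$ along $J_\om(\Om^\ast)$, a.s.\ constancy, and independence of $a$ and $N$.

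Equivalence of the two limits is essentially given. The sandwich $B_{\te^n\om}^{-1} Z_n^\om(a) \leq \cZ_n^\om(a) \leq Z_n^\om(a)$ stated just before the proposition reduces the claim to $\tfrac{1}{n}\log B_{\te^n\om}\to 0$ a.s., and this follows from the integrability of $\log B$ (part of (H2)) together with the ergodic theorem.

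The main substantive step is a super-multiplicative relation for $Z_n^\om(a)$. Given $w\in \cW_\om^n(a,a)$ and $w'\in \cW_{\te^n\om}^m(a,a)$, the concatenation $ww'$ lies in $\cW_\om^{n+m}(a,a)$, since the transition $w_{n-1}\to w'_0=a$ is admissible by the defining property of $\cW_\om^n(a,a)$. For any $x\in [ww'a]_\om$, one has $\phi_{n+m}^\om(x)=\phi_n^\om(x)+\phi_m^{\te^n\om}(T_\om^n x)$ with $x\in [wa]_\om$ and $T_\om^n x\in [w'a]_{\te^n\om}$, cylinders of length $n+1$ and $m+1$. Applying (\ref{eq:key}) to each factor (this is exactly where $2$-H\"older continuity is used, and why the partition function was defined with $\phi_n^\om([wa])$ instead of $\phi_n^\om([w])$) yields
\[ Z_{n+m}^\om(a) \;\geq\; \bigl(B_{\te^n\om}\, B_{\te^{n+m}\om}\bigr)^{-1}\; Z_n^\om(a)\, Z_m^{\te^n\om}(a). \]
Setting $a_n(\om):=-\log Z_n^\om(a)$ and $h:=\log B \in L^1(P)$, this exhibits $a_n$ as a subadditive cocycle over $\te$ perturbed by $h\circ\te^n + h\circ\te^{n+m}$. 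Since both perturbations contribute only $o(n)$ to Birkhoff sums a.s., an application of Kingman's subadditive ergodic theorem to the corrected cocycle $a_n+2S_n h$ (and absorbing the remaining error via Birkhoff on the integrable cocycle $h$) yields a.s.\ convergence of $\tfrac{1}{n}\log Z_n^\om(a)$ along $J_\om(\Om^\ast)$, possibly to $-\infty$. The positivity of $Z_n^\om(a)$ for $n \in J_\om(\Om^\ast)$ eventually is guaranteed by topological mixing together with the uniform bound $N_{aa}\leq N$ on $\Om^\ast$, and the upper integrability required in Kingman's theorem is supplied by (S1) via $Z_1^\om(a)\leq M_\om$. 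The limit is $\te$-invariant, hence a.s.\ constant by ergodicity of $\te$.

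Independence of $N$ is immediate: enlarging $N$ only enlarges $\Om^\ast$, and an a.s.\ limit along $J_\om(\Om^\ast)$ is insensitive to this. For independence of $a$, fix $a,a'\in \cW^1$ and use topological mixing to find a positive-measure event $\Om'$ and integers $p,q$ so that both $N_{a'a}$ and $N_{aa'}$ are bounded by $p$ and $q$ respectively on $\Om'$; bracketing any word in $\cW_\om^n(a,a)$ with admissible transitions $a'\to a$ and $a\to a'$ and estimating $\phi$ via (\ref{eq:key}) and (S1) gives $c(\om)\,Z_n^\om(a)\leq Z_{n+p+q}^{\te^{-p}\om}(a')$ with $\log c\in L^1(P)$; the symmetric bound then yields equality of exponential growth rates. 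The principal technical obstacle is keeping the $\log B$-cocycle under control when invoking Kingman's theorem, but the integrability of $\log B$ from (H2) makes this routine and contributes nothing to the exponential rate.
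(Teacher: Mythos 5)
Your overall strategy---a supermultiplicative inequality for the partition function, a subadditive ergodic theorem, and a bracketing argument for independence of $a$---matches the paper's in outline, and the inequality $Z_{n+m}^\om(a)\geq (B_{\te^n\om}B_{\te^{n+m}\om})^{-1}Z_n^\om(a)Z_m^{\te^n\om}(a)$ is correct. But the step from there to convergence has two genuine gaps. The first is the proposed reduction to Kingman: since $S_{n+m}h=S_nh+S_mh\circ\te^n$ holds exactly, replacing $a_n$ by $a_n+2S_nh$ leaves the subadditivity defect $h\circ\te^n+h\circ\te^{n+m}$ completely unchanged, and a short computation shows that no correction of the form $a_n+\alpha h+\beta\, h\circ\te^n+\gamma S_nh$ can absorb the endpoint term $h\circ\te^{n+m}$. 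So your corrected sequence is not subadditive and Kingman does not apply; one genuinely needs an almost-subadditive ergodic theorem. The paper sidesteps the endpoint error by running the argument on the local preimage function $\cZ_n^\om(a)$, for which (\ref{eq:key}) gives the single-junction-error inequality (\ref{eq:submult_CZ})---precisely the form covered by Derriennic's theorem---and then transfers the result to $Z_n^\om(a)$ via the sandwich you noted.

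The second, more serious, gap is that no subadditive ergodic theorem can be applied to the sequence indexed by all of $\bbN$: for $n\notin J_\om(\Om^\ast)$ (and even for small $n$) one may have $\cW^n_\om(a,a)=\emptyset$, so $-\log Z_n^\om(a)=+\infty$ on sets of positive measure and the hypothesis $\int(-\log Z_1^\om(a))^+\,dP<\infty$ fails; note that $Z_1^\om(a)\leq M_\om$ controls the wrong part of $-\log Z_1^\om(a)$ for this purpose, since Kingman needs a log-integrable \emph{lower} bound on the first term. Since Kingman's conclusion is convergence along all of $\bbN$, you cannot simply read off convergence ``along $J_\om(\Om^\ast)$'' from it. This is exactly why the paper induces on $\Om^\ast$ (the transformation ${\hat\te}$, the return times $\eta_k$, Kac's theorem), applies Derriennic's theorem over ${\hat\te}^M$ along the lattice $\{\eta_{kM}(\om)\}$, and then runs a separate bracketing argument (the $a_k=N+(k\bmod N)$ device) to upgrade convergence from that lattice to all of $J_\om(\Om^\ast)$. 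Both the inducing step and this upgrade are absent from your argument; the remaining parts (equivalence of the two limits, independence of $N$ and $a$) are fine at the level of a sketch.
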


\begin{proof} Since most of the arguments can be found in \cite{DenkerKiferStadlbauer:2008}, we only give a sketch of proof. For a.e. $\om \in \Om^\ast$ and $m,n \geq N$ with $\te^m\om, \te^{m+n}\om  \in \Om^\ast$, it follows from (\ref{eq:key}) that
\begin{equation}\label{eq:submult_CZ} \cZ^\om_m(a)\cZ^{\te^m \om}_n(a) \leq B_{\te^m \om} \cZ^\om_{m+n}(a).\end{equation}
It is well known that the induced transformation ${\hat\te}: \Om^\ast \to \Om^\ast$ given by 
\begin{align*}
\eta:&\;  \Om' \to \bbN, \quad \om \to \eta(\om) := \min \{n \in \bbN\with \te^n \om \in \Om'\}\\
{\hat\te}: &\; \Om' \to \Om', \quad \om \to \te^{\eta(\om)}\om.
\end{align*} 
is an invertible, measure preserving, conservative and ergodic transformation with respect to $P$ restricted to $\Om^\ast$. Set $\eta_k(\om) := \sum_{l=0}^{k-1} \eta({\hat\te}^l\om)$.
It then follows from (\ref{eq:submult_CZ}), for $M\geq N$ and $k,l \in \bbN$, that 
\[-\log \cZ^\om_{\eta_{(k+l)M}(\om)}(a) + \log \cZ^\om_{\eta_{kM}(\om)}(a) + \log \cZ^{{\hat\te}^{kM}\om}_{\eta_{lM}({\hat\te}^{kM}\om)}(a)  \leq \log B_{{\hat\te}^{kM}\om}. \]
Since $\cZ^\om_n(a) \leq \cZ^\om_n \leq M_{\om}M_{\te{\om}} \cdots M_{\te^{n-1}\om}$, it follows from 
(H2), (S1) and Kac's theorem that the almost subadditive ergodic theorem as stated in \cite{Derriennic:1983} is applicable to $-\log \cZ_\cdot^\om(a)$  with respect to the measure preserving transformation ${\hat\te}^M$. Since the quotient $\eta_k/k$ converges by Birkhoff's ergodic theorem, it follows that 
\[f(\om):= \lim_{k \to \infty} \frac{1}{\eta_{kM}(\om)} \log \cZ^\om_{\eta_{kM}(\om)}(a) \]
exists a.s. and is ${\hat\te}^M$-invariant.
 It is now easy to see that 
this limit is independent from the choice of $M\geq N$ and hence the limit is a constant function.
In order to show that the limit does exist along $J_\om({\Om^\ast})$, we now use a different argument as in \cite{DenkerKiferStadlbauer:2008}. 
For $k > 3N$, set $a_k:= N + (k \mod N)$. Then $k - a_k$ is a multiple of $N$ and $2N > a_k \geq N$. In particular, $\cZ^\om_{\eta_{a_k}(\om)}(a), \cZ^{{\hat\te}^{a_k}(\om)}_{\eta_{k-a_k}(\om)}(a)>0$.  Hence, by (\ref{eq:submult_CZ}),  
\[\frac{1}{\eta_k(\om)} \log \left( \cZ^\om_{\eta_{a_k}(\om)}(a) \cZ^{{\hat\te}^{a_k}(\om)}_{\eta_{k-a_k}(\om)}(a)  \right) \leq \frac{1}{\eta_k(\om)} \log \left( B_{{\hat\te}^{a_k}(\om)} \cZ^\om_{\eta_k}(a) \right) .  \]
By passing to the limit, we obtain that $\inf \{(\log \cZ^\om_n(a))\with n \in  J_\om({\Om^\ast})\} \geq f(\om)$ a.s. The other direction then follows by the same argument with $b_k = 2N - (k \mod N)$ and using
\[\cZ^{{\hat\te}^{-b_k}(\om)}_{\eta_{b_k}({\hat\te}^{-b_k}(\om))}(a) \cZ^{\om}_{\eta_{k-b_k}(\om)}(a)
 \leq B_\om  \cZ^{{\hat\te}^{-b_k}(\om)}_{\eta_{k}}(a). \]
The remaining assertions now follow from $Z_n^\om(a) \geq \cZ_n^\om(a) \geq Z_n^\om(a) B_{\te^n\om}^{-1}$ and Step 2 in the proof of Theorem 3.2 in \cite{DenkerKiferStadlbauer:2008}.
\end{proof}

We now introduce the notion of big images and preimages. In here, we will write 
$\# B$ for the cardinality of a set $B$. 
So assume that there exists $\Om_{\textrm{\tiny bi}}\subset \Om$ of positive measure and a family $\{\cI_{\textrm{\tiny bi}}^\om \subset \cW^1_{\om} : \om \in \Om_{\textrm{\tiny bi}}\}$ such that 
\begin{enumerate}
  \item $\#\cI_{\textrm{\tiny bi}}^\om < \infty$,
  \item for each $a \in \cW^1_{\te^{-1}\om}$, there exists $b \in \cI_{\textrm{\tiny bi}}^\om$ with $ab \in \cW^2_{\te^{-1}\om}$.
\end{enumerate}
We then say that $(X,T)$ has the \emph{big image property}. By choosing a subset of $\Om_{\textrm{\tiny bi}}$, one may assume without loss of generality that there exists a finite set $\cI_{\textrm{\tiny bi}}$ such that $\cI_{\textrm{\tiny bi}}^\om \subset \cI_{\textrm{\tiny bi}}$ for each $\om \in \Om_{\textrm{\tiny bi}}$.

Moreover, if there exists $\Om_{\textrm{\tiny bp}}\subset \Om$ of positive measure, and a family $\{\cI_{\textrm{\tiny bp}}^\om \subset \cW^1_{\te^{-1}\om} : \om \in \Om_{\textrm{\tiny bp}}\}$ such that
\begin{enumerate}
  \item $\# \cI_{\textrm{\tiny bp}}^\om < \infty$,
  \item for each $a \in \cW^1_\om$, there exists $b \in \cI_{\textrm{\tiny bp}}^\om$ with $ba \in \cW^2_{\te^{-1}\om}$,
\end{enumerate}
then $(X,T)$ is said to have the \emph{big preimage property}. As above, one may assume without loss of generality that each $\cI_{\textrm{\tiny bp}}^\om$ is a subset of a globally defined finite set $\cI_{\textrm{\tiny bp}}$. If $(X,T)$ is topologically mixing and has the {big image and big preimage property}, then $(X,T)$ is said to have the  \emph{(relative) b.i.p.-property}. 
\begin{lemma} \label{lem:top_exact} If $(X,T)$ has the b.i.p.-property, then, for $a \in \cW^1$ and almost every $\om \in \Om_a$, there exist $\alpha_\om, \beta_\om \in \bbN$ such that  
\begin{enumerate}
  \item $\cW^n_\om(a,b) \neq \emptyset$, for all $n \geq \alpha_\om$ and $b \in \cW^1_{\te^n \om}$, 
  \item $\cW^n_{\te^{-n}\om}(b,a) \neq \emptyset$, for all $n \geq \beta_\om$ and $b \in \cW^1_{\te^{-n}\om}$. 
\end{enumerate}
\end{lemma}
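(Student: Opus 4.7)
The plan is to prove (i) and (ii) by dual arguments, exploiting the finite pivot sets $\cI_{\textrm{\tiny bp}}$ and $\cI_{\textrm{\tiny bi}}$ from the b.i.p.-property in conjunction with topological mixing. Recall that $\cW^n_\om(a,b) \neq \emptyset$ is equivalent to $[a]_\om \cap (T^n_\om)^{-1}[b]_{\te^n\om} \neq \emptyset$, i.e., to the existence of an $\om$-admissible word $(a, w_1, \dots, w_{n-1}, b)$. A preliminary observation used in (i) is that, almost surely, every state in every fiber $\cW^1_{\te^k\om}$ admits a predecessor in $\cW^1_{\te^{k-1}\om}$: fixing $c \in \cW^1$ and any $a' \in \cW^1$, Poincar\'e recurrence yields infinitely many $n \geq K$ with $\te^{-n}\om' \in \Om_{a'}\cap\{N_{a'c}\leq K\}$ (a positive-measure set for $K$ large), so topological mixing forces a predecessor of $c$ at $\om'$; a countable union over $c$ removes the null exception sets.

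For (i), since $\cI_{\textrm{\tiny bp}}$ is finite and each $N_{ad}$ is a.s.\ finite, $N(\om) := \max_{d \in \cI_{\textrm{\tiny bp}}} N_{ad}(\om)$ is a.s.\ finite, and Poincar\'e recurrence to the positive-measure set $\Om_{\textrm{\tiny bp}}$ gives that
\[ \alpha_\om := \min \{m > N(\om) : \te^m\om \in \Om_{\textrm{\tiny bp}}\} \]
is a.s.\ finite. Fix $n \geq \alpha_\om$, $b \in \cW^1_{\te^n\om}$, and set $m := \alpha_\om$. Using the predecessor observation, walk backwards from $b$ to select $s_n = b, s_{n-1}, \dots, s_m$ with each consecutive pair admissible. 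The big preimage property at $\te^m\om$ then supplies $d \in \cI_{\textrm{\tiny bp}}^{\te^m\om} \subset \cI_{\textrm{\tiny bp}}$ with $d s_m \in \cW^2_{\te^{m-1}\om}$, and topological mixing (using $m - 1 \geq N(\om) \geq N_{ad}(\om)$, $\om \in \Om_a$, and $\te^{m-1}\om \in \Om_d$) yields an $\om$-admissible word $(a, x_1, \dots, x_{m-2}, d)$. Concatenation produces the desired $\om$-admissible word $(a, x_1, \dots, x_{m-2}, d, s_m, \dots, s_{n-1}, b)$ of length $n+1$.

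For (ii) the strategy is dual---extend $b$ forward into $\cI_{\textrm{\tiny bi}}$ using big image, then mix to $a$---with one essential added difficulty: the mixing time $N_{ca}(\te^{-m}\om)$ now has its base point $\te^{-m}\om$ moving with $m$. This is the main obstacle and is handled by passing to a small-mixing-time set. Choose $K \in \bbN$ large enough that
\[ \Om^\sharp := \Om_{\textrm{\tiny bi}} \cap \{\om' : \max_{c \in \cI_{\textrm{\tiny bi}}} N_{ca}(\om') \leq K\} \]
has positive measure (possible since each $N_{ca}$ is a.s.\ finite and $\cI_{\textrm{\tiny bi}}$ is finite). Poincar\'e recurrence for $\te^{-1}$ makes
\[ \beta_\om := 1 + \min\{m \geq K : \te^{-m}\om \in \Om^\sharp\} \]
a.s.\ finite. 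For $n \geq \beta_\om$ set $m := \beta_\om - 1$; given $b \in \cW^1_{\te^{-n}\om}$, extend $b$ forward arbitrarily (every state has an outgoing edge) to some $s \in \cW^1_{\te^{-m-1}\om}$, apply big image at $\te^{-m}\om$ to obtain $c \in \cI_{\textrm{\tiny bi}}$ with $sc \in \cW^2_{\te^{-m-1}\om}$, and apply topological mixing (with $m \geq K \geq N_{ca}(\te^{-m}\om)$) to connect $c$ at $\te^{-m}\om$ to $a$ at $\om$. Concatenation produces the required admissible word of length $n+1$ at $\te^{-n}\om$.
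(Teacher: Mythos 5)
Your proof is correct and follows the same route as the paper's (the paper's proof is only a two-line sketch that defines $\alpha_\om$ as the first return time to $\Om_{\textrm{\tiny bp}}$ after the mixing time $\max_{c \in \cI_{\textrm{\tiny bp}}} N_{ac}(\om)$ and declares (ii) similar): mix from $a$ into the finite set $\cI_{\textrm{\tiny bp}}$ at a recurrence time and use big preimages to attach to $b$, and dually for (ii). You supply two details the paper leaves implicit --- the a.s.\ existence of predecessors, needed to treat all $n \geq \alpha_\om$ rather than only those $n$ with $\te^n\om \in \Om_{\textrm{\tiny bp}}$, and the uniform bound $K$ on $N_{ca}(\te^{-m}\om)$ over the moving base point in (ii) --- and both are handled correctly.
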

\begin{proof} In order to show the first assertion, set $N_\om:= \max\{ N_{ac}(\om) \with c \in \cI_{\textrm{\tiny bp}}\}$, and $\alpha_\om:= \min\{n \geq N_\om : \te^n \in \Om_{\textrm{\tiny bp}}\}$. The second assertion follows by a similar construction. 
\end{proof}

The following Lemma now shows that the above partition and preimage functions are proportional to each other along subsequences. In the proof  we only require that $\phi^\om$ is 1-Hölder for $\om \in \te^{-1}(\Om_{\textrm{\tiny bi}} \cup \Om_{\textrm{\tiny bp}}) $. The precise condition is as follows.
\begin{itemize}
  \item[{(H$^\ast$)}] The potential $\phi$ has property (H2) and, for a.e. $\om \in \te^{-1}(\Om_{\textrm{\tiny bi}} \cup \Om_{\textrm{\tiny bp}})$, we have $V_1^\om(\phi) < \infty$. 
\end{itemize}
\begin{lemma} \label{lem:partition_functions} For $(X,T,\phi)$ with the b.i.p.-property, (H$^\ast$) and (S1-2), the following holds.
\begin{enumerate}
  \item  For  a.e. $\om \in \Om_a$ and $k,n \in \bbN$ with $k \geq \alpha_{\om}$, $\te^k\om \in \Om_{\textrm{\tiny bp}}$ and $\te^{k+n}\om \in \Om_a$, there  exists $1\leq C_\om({a,k})<\infty$ such that
  \[\cZ_{n}^{\te^k\om} \leq  C_{\om}({a,k})\; \cZ_{k+n}^{\om}(a).\]
  \item For  a.e. $\om \in \Om$ and $n,k \in \bbN$ with $\te^n \om \in \Om_{\textrm{\tiny bi}}$, $\te^{k+n}\om \in \Om_a$ and $k \geq \beta_{\te^n\om}$, there exists $1\leq D_{\te^n\om}({a,k})<\infty$ such that
  \[ A_n^{\om} \leq  B_{\te^n\om} D_{\te^n\om}({a,k})^{-1} \; \cZ_{n+k}^{\om}.\]
\end{enumerate}
Moreover, $P_G(\phi)$ is finite, and $C_\om(a,k)$ and $D_\om(a,k)$ are measurable.
\end{lemma}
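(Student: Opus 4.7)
The plan is to prove both inequalities by a common mechanism. For (i) I inject the words $w$ indexing $\cZ^{\te^k\om}_n$ into $\cW^{k+n}_\om(a,a)$ by prefixing with a length-$k$ bridge ending at $w_0$, and for (ii) I inject the words $w$ indexing $A^\om_n$ into the words indexing $\cZ^\om_{n+k}$ by appending a length-$k$ bridge starting at a big-image successor of $w_{n-1}$. In both cases, the b.i.p.-property makes the family of bridges finite, and Hölder estimates from (\ref{eq:key}) combined with (H$^\ast$) show that the extra Birkhoff weight contributed by the bridge is bounded below by a $w$-independent constant.

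For (i), big preimage at $\te^k\om$ assigns to each $w_0\in\cW^1_{\te^k\om}$ a letter $b(w_0)\in\cI_{\textrm{\tiny bp}}^{\te^k\om}$ with $b(w_0)w_0\in\cW^2_{\te^{k-1}\om}$, and since $k\geq\alpha_\om$, Lemma~\ref{lem:top_exact}(i) produces for each $b\in\cI_{\textrm{\tiny bp}}^{\te^k\om}$ a bridge $\rho_b\in\cW^k_\om$ starting at $a$ and ending at $b$. Setting $v(w):=\rho_{b(w_0)}\cdot w$ defines an injection into $\cW^{k+n}_\om(a,a)$, and the cocycle identity decomposes
\[
\phi^\om_{k+n}\bigl(\tau_{v(w)}(\xi_{\te^{k+n}\om})\bigr) = \phi^\om_k\bigl(\tau_{\rho_{b(w_0)}}(\tau_w\xi_{\te^{k+n}\om})\bigr) + \phi^{\te^k\om}_n(\tau_w\xi_{\te^{k+n}\om}).
\]
The first summand is evaluated in the length-$k$ cylinder $[\rho_{b(w_0)}]_\om$, so (\ref{eq:key}) applied with $m=n=k$ controls it by a Hölder bound; the critical input is $V_1^{\te^{k-1}\om}(\phi)<\infty$, which is precisely what (H$^\ast$) provides because $\te^{k-1}\om\in\te^{-1}\Om_{\textrm{\tiny bp}}$. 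Minimising the resulting lower bound over the finite set $\cI_{\textrm{\tiny bp}}^{\te^k\om}$ yields the constant $C_\om(a,k)$ (adjusted to lie in $[1,\infty)$ if necessary).

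The proof of (ii) is the mirror image. Big image at $\te^n\om$ picks $c(w_{n-1})\in\cI_{\textrm{\tiny bi}}^{\te^n\om}$ with $w_{n-1}c(w_{n-1})$ admissible, and the backward-time version of Lemma~\ref{lem:top_exact}(ii) (using $k\geq\beta_{\te^n\om}$ together with $\te^{n+k}\om\in\Om_a$) produces for each $c\in\cI_{\textrm{\tiny bi}}^{\te^n\om}$ a bridge $\sigma_c\in\cW^k_{\te^n\om}$ starting at $c$ and ending with a letter admissible before $a$. Put $v(w):=w\cdot\sigma_{c(w_{n-1})}$, so $\tau_{v(w)}(\xi_{\te^{n+k}\om})\in[w]_\om$. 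Splitting $\phi^\om_{n+k}=\phi^\om_n+\phi^{\te^n\om}_k\circ T^n_\om$, the first summand satisfies $e^{\phi^\om_n(\tau_{v(w)}\xi_{\te^{n+k}\om})}\geq B_{\te^n\om}^{-1} e^{\phi^\om_n([w])}$ by (\ref{eq:key}) and (H$^\ast$) (this time using $\te^{n-1}\om\in\te^{-1}\Om_{\textrm{\tiny bi}}$), while the second equals $\phi^{\te^n\om}_k(\tau_{\sigma_c}\xi_{\te^{n+k}\om})$, which takes only finitely many values as $c$ ranges over $\cI_{\textrm{\tiny bi}}^{\te^n\om}$ and is therefore bounded below by some $D_{\te^n\om}(a,k)^{-1}>0$. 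Summing over $w$ and rearranging gives the claim.

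Finiteness of $P_G(\phi)$ follows quickly from Proposition~\ref{prop:pressure}: iterating (S1) yields $\cZ^\om_n\leq M_\om\cdots M_{\te^{n-1}\om}$, whence $P_G(\phi)\leq\int\log M\,dP<\infty$ by Birkhoff; dually (S2) gives $\cZ^{\te^k\om}_n\geq\prod_{j=0}^{n-1}m_{\te^{k+j}\om}$, and combining with (i) forces $P_G(\phi)\geq\int\log m\,dP>-\infty$. Measurability of $C_\om(a,k)$ and $D_\om(a,k)$ is inherited from that of $\ka$, $V_1^\om(\phi)$, the random finite sets $\cI_{\textrm{\tiny bi}}^\om$, $\cI_{\textrm{\tiny bp}}^\om$, and from measurable (e.g.\ lexicographic) selection of the bridges $\rho_b$, $\sigma_c$. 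I expect the main obstacle to be not the combinatorial construction of the bridges but the verification that at each use of (\ref{eq:key}) the relevant base fibre, namely $\te^{k-1}\om$ in (i) or $\te^{n-1}\om$ in (ii), lies in $\te^{-1}(\Om_{\textrm{\tiny bi}}\cup\Om_{\textrm{\tiny bp}})$ so that the 2-Hölder hypothesis (H2) is upgraded to 1-Hölder exactly where it is needed; this is precisely why the regularity condition is formulated for this specific preimage set and not globally.
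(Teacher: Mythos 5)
Your argument is essentially the paper's own proof: the same bridge words of length $k$ built from $\cI_{\textrm{\tiny bp}}^{\te^k\om}$ (prefixing, via Lemma~\ref{lem:top_exact}) and $\cI_{\textrm{\tiny bi}}^{\te^n\om}$ (appending), the same splitting of the Birkhoff sum, the same use of (H$^\ast$) to supply $V_1^{\te^{k-1}\om}(\phi)<\infty$ resp.\ $V_1^{\te^{n-1}\om}(\phi)<\infty$ exactly where a length-one variation appears, and the same derivation of $|P_G(\phi)|<\infty$ from (S1--2) together with Proposition~\ref{prop:pressure} and measurability by lexicographic selection. The only cosmetic difference is bookkeeping of constants in (ii), where the paper keeps the factor $e^{-V_1^{\te^{n-1}\om}(\phi)}$ explicit alongside $B_{\te^n\om}^{-1}$ and absorbs it into $D_{\te^n\om}(a,k)$, while you fold it into the bound informally; this does not affect correctness.
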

\begin{proof} 
In order to show the first assertion, note that the big preimage property combined with Lemma \ref{lem:top_exact} implies the existence of $\{v_j \in  \cW^k_{\om} \with j= 1, \ldots, \# \cI_{\textrm{\tiny bp}}^{\te^k\om} \}$ such that, for each $b \in \cW^1_{\te^k\om}$, there exists $j \in \{1, \ldots, \# \cI_{\textrm{\tiny bp}}^{\te^k\om}\}$ with $v_j b\in \cW^{k+1}_{\om}$. This then gives that  
\begin{align*}
\cZ_{k+n}^{\om}(a) & \geq \sum_{j=1}^{\# \cI_{\textrm{\tiny bp}}^{\te^k\om}} \sum_{w \with v_j w\in \cW^{k+n}_{\om}} e^{\phi^{\om}_{n+k}(\tau_{v_jw}(\xi_{\te^{k+n} \om}))}\\
& \geq \sum_{j=1}^{\# \cI_{\textrm{\tiny bp}}^{\te^k\om}} \inf\left\{e^{\phi^{\om}_{k}(x)} : x \in [v_j]_{\om} \right\}  \sum_{w \with v_j w\in \cW^{k+n}_{\om}} e^{\phi^{\te^k\om}_{n}(\tau_{w}(\xi_{\te^{k+n} \om}))}\\
& \geq \inf \left\{e^{\phi^{\om}_{k}(x)} : x \in [v_j]_{\om}, j = 1, \ldots,  {\# \cI_{\textrm{\tiny bp}}^\om}\right\} \; \cZ_n^\om =: (C_{\om}({a,k}))^{-1}\; \cZ_{n}^{\om}.
\end{align*}
Observe that $C_\om(a,k)>0$ which follows from (H$^\ast$) and  (\ref{eq:finite}).
Moreover, by choosing e.g. $v_1, \ldots v_{\# \cI_{\textrm{\tiny bp}}^\om}$ to be minimal with respect to the lexicographic ordering, it follows that $\om \to C_\om(a,k)$ is measurable.
Assertion (ii) follows by a similar argument, that is by
\begin{align*} \cZ_{n+k}^{\om} & \geq  \sum_{j=1}^{\# \cI_{\textrm{\tiny bi}}^{\te^n\om}} \sum_{w \with  wv_j \in \cW^{n+k}_{\om}} e^{\phi^{\om}_{n+k}(\tau_{w,v_j}(\xi_{\te^{n+k} \om}))} \\
& \geq A_n^\om B^{-1}_{\te^n\om} \exp({-V_1^{\te^{n-1}\om}(\phi)} )
\sum_{j=1}^{\# \cI_{\textrm{\tiny bi}}^{\te^n\om}} \inf\left\{e^{\phi^{\te^n\om}_{k}(x)} : x \in [v_j]_{\te^n\om} \right\} ,
\end{align*}
where $\{v_j \in  \cW^k_{\te^n\om} \with j= 1, \ldots, \# \cI_{\textrm{\tiny bi}}^{\te^n\om} \}$ are constructed from the big image property. For the proof of $|P_G(\phi)| < \infty$, note that 
\[ \frac{1}{n}\sum_{k=0}^{n-1} \log m_{\te^k \om} \stackrel{n\to \infty}{\longrightarrow} \int \log m_\om dP \;\hbox{ and } \;\frac{1}{n}\sum_{k=0}^{n-1} \log M_{\te^k \om} \stackrel{n\to \infty}{\longrightarrow} \int \log M_\om dP  \]
by the ergodic theorem. It hence follows from $\cZ^\om_n(a) \leq M_\om \cdots M_{\te^{n-1}\om}$ that 
$P_G(\phi)<\infty$. Furthermore, from assertions (i) and (ii) combined  
with $\log A_n^\om \geq \sum_{k=0}^{n-1} \log m_{\te^k \om}$ and the convergence in  Proposition \ref{prop:pressure}, we obtain that $P_G(\phi)\geq \int \log m_\om dP(\om)$. 
\end{proof}

Using these estimates, we are now in position to prove the main result of this section. In the statement of the theorem, $\Om^\ast$ refers to the subset of $\Om_a$ in the definition of the relative Gurevi\v{c} pressure.
\begin{theorem}\label{theo:ergodic}
Assume that $(X,T)$ has the b.i.p.-property and (H$^\ast$) and (S1-S2) are satisfied. Then $P_G(\phi)$ is finite and, for a.e. $\om \in \Om$,   
\[ \sum_{n \in J_\om(\Om^\ast)} s^n \cZ_n^\om \quad \begin{cases} < \infty  &:\; s< e^{-P_G(\phi)},\\ =\infty &:\; s= e^{-P_G(\phi)}.\end{cases}\]
\end{theorem}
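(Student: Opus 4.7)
The plan is to treat the convergence and divergence regimes separately; the divergence at the critical parameter is the substantive content, since the upper regime reduces to a standard geometric-series estimate.

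For the convergence at $s<e^{-P_G(\phi)}$, I would combine Proposition \ref{prop:pressure} with Lemma \ref{lem:partition_functions}(ii). Without loss of generality assume $\Om^\ast\subseteq\Om_a\cap\Om_{\textrm{\tiny bi}}$ has positive measure. For $n\in J_\om(\Om^\ast)$ pick $k_n=k_n(\om)$ minimal with $k_n\geq\beta_{\te^n\om}$ and $\te^{n+k_n}\om\in\Om^\ast$; by Lemma \ref{lem:top_exact} and the Birkhoff ergodic theorem applied to $\1_{\Om^\ast}$ and to $\beta$, $k_n/n\to 0$ a.s., and $(\log B_{\te^n\om})/n\to 0$ follows from (H2) via Borel--Cantelli. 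Lemma \ref{lem:partition_functions}(ii) then gives
\[ \cZ_n^\om\leq A_n^\om\leq B_{\te^n\om}\,D_{\te^n\om}(a,k_n)^{-1}\,\cZ_{n+k_n}^\om(a),\]
while Proposition \ref{prop:pressure} supplies $\cZ_{n+k_n}^\om(a)\leq e^{(n+k_n)(P_G(\phi)+\epsilon)}$ for large $n$ and any fixed $\epsilon>0$. Choosing $\epsilon$ with $se^{P_G(\phi)+\epsilon}<1$ forces $s^n\cZ_n^\om$ to be dominated by a geometric sequence, yielding summability.

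For the divergence at $s=e^{-P_G(\phi)}$, I would adapt the strategy from Sarig's treatment of the deterministic b.i.p.\ case. Three ingredients come together: (a) the super-multiplicative estimate $\cZ_m^\om(a)\,\cZ_n^{\te^m\om}(a)\leq B_{\te^m\om}\,\cZ_{m+n}^\om(a)$ from the proof of Proposition \ref{prop:pressure}; (b) the sub-multiplicative estimate $A_{m+n}^\om\leq A_m^\om\,A_n^{\te^m\om}$, which follows directly from the cocycle identity $\phi_{m+n}^\om=\phi_m^\om+\phi_n^{\te^m\om}\circ T_\om^m$ and the definition of $A_n^\om$; and (c) the bi-directional comparison $\cZ_n^\om(a)\leq A_n^\om\leq C_\om(n,k_n)\,\cZ_{n+k_n}^\om(a)$ of Lemma \ref{lem:partition_functions}(ii), with $\log C_\om(n,k_n)=o(n)$ a.s. From (b) and Kingman's subadditive ergodic theorem, $(1/n)\log A_n^\om\to P_G(\phi)$ a.s.; together with (a) and (c) I would upgrade this to a pointwise lower bound $\cZ_n^\om(a)\geq c(\om)\,e^{nP_G(\phi)}$ along a positive-density subset of $J_\om(\Om^\ast)$. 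Since $J_\om(\Om^\ast)$ itself has positive density in $\bbN$ by Birkhoff, divergence of $\sum_{n\in J_\om(\Om^\ast)}e^{-nP_G(\phi)}\cZ_n^\om\geq\sum_{n\in J_\om(\Om^\ast)}e^{-nP_G(\phi)}\cZ_n^\om(a)$ follows at once.

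The main obstacle is obtaining the sharp pointwise lower bound $\cZ_n^\om(a)\gtrsim e^{nP_G(\phi)}$ rather than merely $e^{n(P_G(\phi)-\epsilon)}$; a direct appeal to the logarithmic asymptotic in Proposition \ref{prop:pressure} gives only the latter, whose weighted sum at $s=e^{-P_G(\phi)}$ is a convergent geometric series. The b.i.p.-property enters precisely here by coupling $\cZ_n^\om(a)$ and the sub-multiplicative cocycle $A_n^\om$ in both directions, thereby reducing the local exponential asymptotics to those of a finite-state system for which classical Perron--Frobenius theory yields sharp $e^{nP_G(\phi)}$ control.
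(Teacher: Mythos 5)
Your treatment of the regime $s<e^{-P_G(\phi)}$ is essentially right in spirit (though note that the upper bound on $\cZ_n^\om$ by a \emph{local} preimage function comes from Lemma \ref{lem:partition_functions}(i), not (ii); part (ii) bounds $A_n^\om$ by the full preimage function $\cZ_{n+k}^\om$, not by $\cZ_{n+k}^\om(a)$). The genuine gap is in the divergence part, and it is exactly the one you name yourself: you need $\cZ_n^\om(a)\gtrsim e^{nP_G(\phi)}$ pointwise along a positive-density set, whereas Proposition \ref{prop:pressure} only gives $e^{n(P_G(\phi)-\epsilon)}$. Your proposed fix --- ``reducing the local exponential asymptotics to those of a finite-state system for which classical Perron--Frobenius theory yields sharp control'' --- is not carried out, and it is unclear how it could be: the fibers have countably many states, the environment is random, and no finite subsystem carries the full pressure. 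Worse, the sharp lower bound you are after is essentially the positive recurrence estimate of Corollary \ref{cor:pr}, which in the paper is \emph{deduced from} Theorem \ref{theo:ergodic} via the conformal measure of Theorem \ref{theo:conformal}; so as it stands this route risks circularity.

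The paper avoids any sharp lower bound by arguing by contradiction with the submultiplicative quantity $A_n^\om$. Normalize $P_G(\phi)=0$. If $\sum_{n\in J_\om(\Om_{\textrm{\tiny bi}})}A_n^\om<\infty$ a.s., then for any $\epsilon>0$ there exist $\Om'\subset\Om_{\textrm{\tiny bi}}$ of positive measure and $N$ with $A_n^\om<\epsilon$ for all $\om\in\Om'$ and $n\geq N$. Running the jump transformation $\te^\ast$ on $\Om'$ with return times $\geq N$ and using $A_{m+n}^\om\leq A_m^\om A_n^{\te^m\om}$ gives $A_{\eta_k^\ast(\om)}^\om\leq\epsilon^k$, hence
\[ \limsup_{k\to\infty}\ \frac{1}{\eta_k^\ast(\om)}\log A_{\eta_k^\ast(\om)}^\om\ \leq\ \frac{\log\epsilon}{N}\ <\ 0. \]
Since $A_n^\om\geq\cZ_n^\om\geq\cZ_n^\om(a)$, this contradicts $\lim_{n\in J_\om(\Om^\ast)}n^{-1}\log\cZ_n^\om=0$, which is available from Lemma \ref{lem:partition_functions}(i) together with Proposition \ref{prop:pressure}. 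Therefore $\sum_n A_n^\om=\infty$ a.s., and Lemma \ref{lem:partition_functions}(ii) transfers this divergence to $\sum_{n\in J_\om(\Om^\ast)}\cZ_n^\om$. The point is that convergence of the series already forces a strictly negative exponential rate for a \emph{sub}multiplicative sequence, so no sharp lower bound on individual terms is ever needed; you have all the ingredients (your items (a)--(c)) but assembled them in the wrong direction.
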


\begin{proof} Note that $P_G(\phi)$ is finite by Lemma \ref{lem:partition_functions}.
By replacing $\phi$ by $\phi-\log \phi$ we now assume without loss of generality that $P_G(\phi)=0$. From Lemma \ref{lem:partition_functions} (i), it then follows, for a.e. $\om \in \Om_{\textrm{\tiny bp}}$, that 
\[\lim_{n \in J_\om (\Om^\ast)} \frac{1}{n} \log \cZ_n^\om =0.\]
We will show that $\sum A_n^\om < \infty$ leads to a contradiction of  $P_G(\phi)=0$. So assume that, for a.e. $\om \in \Om_{\textrm{\tiny bi}}$,
$ \sum_{n \in J_\om(\Om_{\textrm{\tiny bi}})}  A_n^\om < \infty$.
Hence, for $\epsilon >0$, there exist $\Om' \subset \Om_{\textrm{\tiny bi}}$ and $N \in \bbN$ such that $A_n^\om < \epsilon$ for all $\om \in \Om'$ and $n \geq N$. Now consider the jump transformation $\te^\ast: \Om' \to \Om'$ given by 
\begin{align*}
\eta^\ast:&\;  \Om' \to \bbN, \quad \om \to \eta^\ast(\om) := \min \{n \in \bbN\with n \geq N, \te^n \om \in \Om'\},\\
\te^\ast: &\; \Om' \to \Om', \quad \om \to \te^{\eta^\ast(\om)}\om.
\end{align*} 
Note that $\te^*$ is invertible, and that $P|_{\Om'}$ is a finite $\te^*$-invariant measure (see e.g. \cite{Thaler:1983}). In particular, it follows that $\te^*(\Om')= \Om' \mod P$. Set
\[ \eta_k^\ast(\om):= \sum_{i=0}^{k-1} \eta^\ast({(\te^\ast)^i \om}).\]
Since the sequence $(\log A_n^\om)$ is subadditive, it follows that $ A_{\eta^\ast_k(\om)}^\om \leq \epsilon^k$. Furthermore, by the ergodic theorem, $\eta_k^\ast(\om)/k$ converges to an invariant function which is bigger than or equal to $N$. In particular,  
\[ \lim_{k \to \infty} \frac{1}{\eta_k^\ast(\om)}  \log A_{\eta_k^\ast(\om)}^\om \leq \frac{k}{\eta_k^\ast(\om)} \log \epsilon \leq \frac{\log \epsilon}{N} \hbox{ a.s.}\]
Using  $A_n^\om \geq \cZ_n^\om$, we obtain that $\lim_{n \in J_\om(\widetilde\Om)} (\log  \cZ_n^\om)/n <0 $ for a.e. $\om \in \Om_{\textrm{\tiny bi}}$ and a suitable subset $\widetilde\Om \subset \Om^\ast$ of positive measure. Since this is a contradiction to $P_G(\phi)=0$, it follows that 
\[  \sum_{n \in J_\om(\Om_{\textrm{\tiny bi}})}  A_n^\om = \infty\]
for a.e. $\om \in \Om_{\textrm{\tiny bi}}$ and, by subadditivity, for a.e.  $\om \in \Om$. The assertion then follows from Lemma \ref{lem:partition_functions} (ii).
\end{proof}

\section{Random eigenvalues and conformal measures} \label{sec:4}
The first step in this section is to construct random eigenvalues and conformal measures for random topological Markov chains for which the sum of the preimage function diverges. As a corollary, we obtain that the random eigenvalue can be identified with the quotient of two random power series. In particular, this then gives in analogy to deterministic topological Markov chains (see \cite{Sarig:2003}) that the b.i.p.-property implies positive recurrence. Throughout this section we assume that $(X,T)$ is topologically mixing, $\phi$ satisfies (H2) and (S1-2), and $P_G(\phi)$ is finite. Hence, we may assume without loss of generality that $P_G(\phi)=0$. Now fix $a \in \cW^1$ and, for $\widetilde \Om \subset \Om_a$, $\om \in \Om$ and $0<s\leq 1$, set
\[ P_\om(s):= \sum_{n \in J_\om(\widetilde \Om)} s^n \cZ_n^\om.\]
If there exists $\widetilde \Om \subset \Om_a$ such that $P_\om(1)=\infty$ and $P_\om(s)< \infty$ for $0<s<1$, we say that 
$(X,T,\phi)$ is of \emph{divergence type}. 
In particular, observe that for a system of divergence type, we have $\lim_{n \in J_\om(\widetilde \Om)}(\log \cZ_n^\om)/n = 0 = P_G(\phi)$ by Hadamard's formula for the radius of convergence. 
Also note that systems with the b.i.p.-property are in this class as a consequence of Theorem \ref{theo:ergodic}.
\begin{lemma}\label{lem:weak_limit}
 There exists a sequence $(s_n:n \in \bbN)$ with $s_n \nearrow 1$ and $\la^\ast :\om \to \bbR$ with $\log \la^\ast \in L^1(P)$ such that 
\[ \int g(\om) \log (\la^\ast(\om)) dP(\om) = \lim_{n \to \infty} \int g(\om) \log(P_\om(s_n)/P_{\te \om(s_n)})dP(\om) \]
for all $g \in L^\infty(P)$. Furthermore, we have $\int \log \la^\ast dP = 0$ and $ 
m_\om \leq \la^\ast(\om) \leq M_\om$, for $P$-a.e. $\om\in \Om$.
\end{lemma}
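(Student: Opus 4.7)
The plan is to set $\la_s(\om) := P_\om(s)/P_{\te\om}(s)$ for $s \in (0,1)$, establish a pointwise sandwich bounding $\la_s$ between $m_\om$ and $M_\om$ up to a correction vanishing as $s\nearrow 1$, extract a weakly $L^1$-convergent subsequence via Dunford--Pettis, and pin down the integral of the limit through a telescoping/Birkhoff argument.

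To obtain the sandwich I would exploit the factorisation $L_\phi^{\om,n} = L_\phi^{\te\om,n-1}\circ L_\phi^\om$: applying the positive operator $L_\phi^{\te\om,n-1}$ to the constant-function inequality $m_\om \le L_\phi^\om(1) \le M_\om$ and evaluating at $\xi_{\te^n\om}$ gives
\[ m_\om \cZ_{n-1}^{\te\om} \;\le\; \cZ_n^\om \;\le\; M_\om \cZ_{n-1}^{\te\om}, \qquad n\ge 1.\]
Since $n \in J_\om(\widetilde\Om)$ is equivalent to $n-1 \in J_{\te\om}(\widetilde\Om)$ whenever $n\ge 2$, multiplying by $s^n$, summing, and treating the isolated $n=1$ term via $\cZ_1^\om \le M_\om$ yields
\[ m_\om\, s \;\le\; \la_s(\om) \;\le\; M_\om\, s\,\bigl(1 + P_{\te\om}(s)^{-1}\bigr).\]
Divergence type and the $\te$-invariance of $P$ force $P_{\te\om}(s)\nearrow \infty$ a.s.\ as $s\nearrow 1$, so the correction is a.s.\ negligible, and in particular $\log m_\om \le \liminf_{s\nearrow 1} \log\la_s$ while $\limsup_{s\nearrow 1} \log\la_s \le \log M_\om$ a.s.

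Weak $L^1$-compactness will follow from uniform integrability of $\{\log\la_s\}_{s\in[s_0,1)}$. The lower sandwich gives $(\log\la_s)^- \le |\log s_0| + (\log m_\om)^-$, which is uniformly $L^1$-bounded by (S2). For the upper tail, monotonicity of $P_{\te\om}(s)$ in $s$, the a.s.\ convergence $P_{\te\om}(s)^{-1}\to 0$, (S1), and an Egorov-type splitting together supply an $L^1(P)$ dominant on a set of arbitrarily large measure. Dunford--Pettis then extracts a subsequence $s_n\nearrow 1$ along which $\log\la_{s_n}\to \log\la^\ast$ weakly in $L^1(P)$; testing against nonnegative $g\in L^\infty(P)$ and invoking dominated convergence on the pointwise sandwich transfers the bounds to the limit, producing $m_\om \le \la^\ast \le M_\om$ a.s.

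The identity $\int \log\la^\ast\, dP = 0$ will follow once $\int \log\la_s\, dP = 0$ is established for every $s\in(0,1)$. The telescoping cocycle $\sum_{k=0}^{n-1}\log\la_s(\te^k\om) = \log P_\om(s) - \log P_{\te^n\om}(s)$ together with Birkhoff applied to $\log\la_s\in L^1(P)$ give a.s.\ convergence of $n^{-1}$ times the left hand side to $\int\log\la_s\, dP$. On the right, $n^{-1}\log P_\om(s)\to 0$ trivially, while $\te$-invariance makes $\log P_{\te^n\om}(s)$ equidistributed with the a.s.\ finite random variable $\log P_\om(s)$, so $n^{-1}\log P_{\te^n\om}(s)\to 0$ in probability. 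Uniqueness of probability limits then forces $\int\log\la_s\, dP = 0$, and testing the weak $L^1$-limit with $g\equiv 1$ yields $\int\log\la^\ast\, dP = 0$. The most delicate step I anticipate is the uniform integrability underpinning the Dunford--Pettis extraction, since the correction $\log(1+1/P_{\te\om}(s))$ is only pointwise negligible and is not obviously dominated by a fixed $L^1(P)$ function uniformly in $s$.
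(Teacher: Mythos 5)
Your proposal follows essentially the same route as the paper: the same one-step factorisation of the Ruelle operator yielding the sandwich $s\,m_\om \leq P_\om(s)/P_{\te\om}(s) \leq s\,M_\om\bigl(1+P_{\te\om}(s)^{-1}\bigr)$, uniform integrability plus weak $L^1$-compactness to extract $\log\la^\ast$, the ergodic (telescoping cocycle) argument for $\int\log\la^\ast\,dP=0$, and $P_{\te\om}(s)\to\infty$ to pass the bounds $m_\om\leq\la^\ast\leq M_\om$ to the limit. The only point where you are arguably more explicit than the paper is the uniform integrability of the correction term $\log\bigl(1+P_{\te\om}(s)^{-1}\bigr)$, which the paper simply asserts from $\log\|L^\om_\phi(1)\|\in L^1(P)$ and which you correctly flag as the delicate step.
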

\begin{proof} 
 Observe that  
\begin{align*}
P_\om(s)& = \sum_{n \in J_\om(\widetilde \Om)} s^n \sum_{x \in X_\om: T^n_\om(x)= \xi_{\te^n\om}} e^{\phi^\om(x)} e^{\phi_{n-1}^{\te\om}(T_\om(x))} \\ 
&=
sL^\om_\phi(1)(\xi_{\te\om}) +  \sum_{n \in J_\om(\widetilde \Om), n \geq 2} s^n \sum_{y \in X_{\te\om}: T^{n-1}_{\te \om}(y)= \xi_{\te^n\om}} L^\om_\phi(1)(y) e^{\phi_{n-1}^{\te\om}(y)} \\
&\leq s \cdot M_\om (1+ P_{\te\om}(s)).
 \end{align*}
By applying the same argument to obtain the lower bound and using $\left(1 + ({P_{\te \om}(s)})^{-1}\right) \geq 1$, we arrive at 
\begin{equation} \label{eq:estimate_quotient} 
s  m_\om \leq \frac{P_\om(s)}{P_{\te \om}(s)} \leq  s M_\om\left(1 + ({P_{\te \om}(s)})^{-1}\right)  . \end{equation} 
Since $\log \| L^\om_\phi(1) \|\in L^1(P)$, the set $\{\log(P_{\om}(s)/P_{\te\om}(s)): s < 1 \}$ is uniformly integrable. This shows the existence of $\log \la^\ast \in L^1(P)$ as a weak limit. By applying the ergodic theorem, it then follows that $\int \log \la^\ast dP =0$. 
 The remaining assertion can be proved by combining $\lim_{s \to 1+} P_{\te \om}(s) = \infty$  with the above inequalities. 
\end{proof}

In order to obtain pointwise convergence of $P_\om(s)/P_{\te \om}(s)$ as $s \to 1$, we  construct a random conformal measure using a randomized version of the construction in \cite{DenkerUrbanski:1991b}. As a consequence of (S1-2), the construction  and the proof of relative tightness will turn out to be significantly easier than in \cite{DenkerKiferStadlbauer:2008}. 
For $s<1$ and $\om \in \Om$, set 
\[\mu_{\om,s} := \frac{1}{P_\om(s)} \sum_{n \in J_\om(\widetilde \Om)} s^n \sum_{x: T_\om^n(x)= \xi_{\te^n\om}}e^{\phi_n^\om(x)} \delta_x \]
where $\delta_x$ refers to the Dirac measure at $x \in X_\om$. For $A \in \cB_\om$, it  hence follows that
\[\mu_{\om,s}(A) := \frac{1}{P_\om(s)} \sum_{n \in J_\om(\widetilde \Om)} s^n L_\phi^{\om,n}(1_A)(\xi_{\te^n\om}).\]
In order to show that a reasonable limit of this family of measures exists (for $s \nearrow 1$), we will employ Crauel's random Prohorov theorem (see \cite{Crauel:2002}). So recall that $\{\mu_{\om,s}: \om \in \Om, s\geq s_0\}$ is relatively tight if for all $\epsilon >0$ there exists a set $K \subset X$ such that $K\cap X_\om$ is compact for a.e. $\om \in \Om$ and $\int \mu_s(K)dP > 1- \epsilon$ for all  $s>s_0$. 
\begin{lemma}\label{lem:tight} The family $\{\mu_{\om,s}: \om \in \Om, n \in \bbN\}$ is relatively tight. 
\end{lemma}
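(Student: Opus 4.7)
The plan is to produce, for each $\epsilon > 0$, a measurable family of compact sets $\{K_\om \subset X_\om\}$ satisfying $\int \mu_{\om, s}(K_\om^c)\, dP(\om) < \epsilon$ uniformly in $s$; relative tightness then follows from Crauel's random Prohorov theorem. As $X_\om \subset \bbN^\bbN$, compactness of $K_\om$ amounts to each coordinate being uniformly bounded, so I take $K_\om = \{x \in X_\om \with x_i \leq N_i(\om)\text{ for all } i \geq 0\}$ for measurable $N_i \colon \Om \to \bbN$, and reduce via the union bound to estimating each coordinate tail $\mu_{\om, s}(\{x_i > N_i(\om)\})$ summably in $i$ and uniformly in $s$.

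The first step reduces the $i$-th-coordinate tail at $\om$ to the first-coordinate tail at $\te^i\om$. The identity $L_\phi^{\om, n}(f \circ T_\om^i) = L_\phi^{\te^i\om, n-i}(f \cdot L_\phi^{\om, i}(1))$ yields the Radon--Nikodym derivative
\[\frac{d(\mu_{\om, s} \circ T_\om^{-i})}{d\mu_{\te^i\om, s}}(y) = \frac{s^i\, P_{\te^i\om}(s)}{P_\om(s)}\, L_\phi^{\om, i}(1)(y),\]
and combining $L_\phi^{\om, i}(1) \leq \prod_{j<i} M_{\te^j\om}$ from (S1) with the $i$-fold iteration of the single-step estimate $P_\om(s) \geq s m_\om P_{\te\om}(s)$ from the proof of Lemma~\ref{lem:weak_limit} gives
\[\mu_{\om, s}(\{x_i > N\}) \leq \prod_{j=0}^{i-1}\frac{M_{\te^j\om}}{m_{\te^j\om}}\cdot \mu_{\te^i\om, s}(\{y_0 > N\}).\]

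The second step delivers a uniform-in-$s$ bound on the first-coordinate tail that vanishes as $N \to \infty$. Writing $\beta_\om^N := L_\phi^\om(1_{\{x_0 > N\}})$, the Ruelle recursion $L_\phi^{\om, n}(1_{\{x_0 > N\}}) = L_\phi^{\te\om, n-1}(\beta_\om^N)$ together with the pointwise majorisation $\beta_\om^N \leq \gamma_\om^N \cdot L_\phi^\om(1)$ for $\gamma_\om^N := \sup_{y \in X_{\te\om}} \beta_\om^N(y)/L_\phi^\om(1)(y) \in [0, 1]$ and the positivity of $L_\phi^{\te\om, n-1}$ give $L_\phi^{\om, n}(1_{\{x_0 > N\}})(\xi_{\te^n\om}) \leq \gamma_\om^N \cZ_n^\om$, hence
\[\mu_{\om, s}(\{x_0 > N\}) \leq \gamma_\om^N,\]
independently of $s$. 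Verifying that $\gamma_\om^N \to 0$ almost surely is the technical core: (S2) bounds the denominator from below by $m_\om$, (S1) makes the tail $\sum_{j > N} e^{\phi^\om((j, y))}$ vanish pointwise in $y$, and the 2-Hölder distortion from (H2) on length-two cylinders propagates this pointwise decay into a uniform decay over $y \in X_{\te\om}$.

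With the two steps in hand, I measurably pick $N_i(\om)$ so that $\prod_{j<i}(M_{\te^j\om}/m_{\te^j\om})\cdot\gamma_{\te^i\om}^{N_i(\om)} < \epsilon\, 2^{-i-1}$ almost surely---possible thanks to the measurability of $\gamma$ in its $\om$-argument and its $N\to\infty$ vanishing---and summing over $i$ before integrating in $\om$ produces $\int \mu_{\om, s}(K_\om^c)\, dP < \epsilon$ uniformly in $s$. The main obstacle is the verification that $\gamma_\om^N \to 0$: the supremum over the fiber in its definition prevents a direct appeal to dominated convergence, and it is precisely the specific interplay of (H2) with (S1-2)---the two-sided boundedness of $L_\phi^\om(1)$ together with the controlled variation on length-two cylinders---that supplies the uniform tail control, making the argument substantially simpler than the corresponding tightness proof in~\cite{DenkerKiferStadlbauer:2008}.
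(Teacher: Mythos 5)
Your overall architecture is the same as the paper's: you control the $\mu_{\om,s}$-mass of the coordinate tail $\{x_i>N\}$ by transferring it to the fibre over $\te^i\om$ via the cocycle identity for $L_\phi$, the bound $s^i P_{\te^i\om}(s)/P_\om(s)\le \prod_{j<i}m_{\te^j\om}^{-1}$ from (\ref{eq:estimate_quotient}) together with $L_\phi^{\om,i}(1)\le\prod_{j<i}M_{\te^j\om}$, then use (H2) and (S1) to make the resulting single-coordinate tail small uniformly in $s$, and finally assemble $K$ from measurable cutoffs $N_i(\om)$ and a union bound. Your $\gamma_\om^N$ packaging of the last step is a clean reformulation of the paper's estimate $\sum_{b\ge c}e^{\phi^{\te^k\om}([b])}\le B_{\te^{k+1}\om}M_{\te^k\om}$, and it even lets you dispense with the paper's auxiliary set $\Om'$ on which $P_\om(s)\ge C$.

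However, your Step 1 contains a genuine error. The measure $\mu_{\om,s}$ is a normalized sum over $n\in J_\om(\widetilde\Om)$ of atomic measures supported on $T_\om^{-n}(\{\xi_{\te^n\om}\})$, and the identity $L_\phi^{\om,n}(f\circ T_\om^i)=L_\phi^{\te^i\om,n-i}\bigl(f\cdot L_\phi^{\om,i}(1)\bigr)$ only accounts for the terms with $n>i$. For $n\le i$, every $x\in T_\om^{-n}(\{\xi_{\te^n\om}\})$ has its $i$-th coordinate forced to equal $(\xi_{\te^n\om})_{i-n}$, so $\mu_{\om,s}\circ T_\om^{-i}$ carries extra mass concentrated on the single points $T^{i-n}_{\te^n\om}(\xi_{\te^n\om})$ and is in general \emph{not} absolutely continuous with respect to $\mu_{\te^i\om,s}$; your claimed Radon--Nikodym formula is therefore false, and the displayed inequality $\mu_{\om,s}(\{x_i>N\})\le\prod_{j<i}(M_{\te^j\om}/m_{\te^j\om})\,\mu_{\te^i\om,s}(\{y_0>N\})$ fails whenever $N<(\xi_{\te^n\om})_{i-n}$ for some $n\le i$, since the left-hand side then contains the term $s^n\cZ_n^\om/P_\om(s)$, which need not be small. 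This is exactly the contribution the paper isolates as $\Sigma_1^\om(b)$ (and $\Sigma_2^\om(b)$) and disposes of by noting that each such $n$ charges at most one value of the $i$-th coordinate, so the whole contribution vanishes once $N$ exceeds the finite threshold $c_{\om,i}:=\max_{1\le n\le i}(\xi_{\te^n\om})_{i-n}$. The repair is immediate---additionally require $N_i(\om)\ge c_{\om,i}$ in your final choice of cutoffs---but as written your key inequality of Step 1 is incorrect and the argument does not close without this observation.
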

\begin{proof} For  the proof, for $k \in \bbN$, $\om  \in \Om$ and $b \in \bbN$, set
\begin{align*}
A_\om^{k,b} &:= \{ (x_0,x_1, \ldots) \in X_\om \with  x_k = b\} = T_\om^{-k}([b]_{\te^k\om}),\\
E_n^\om & := T_\om^{-n}(\{\xi_{\te^n \om}\}), \quad E_n^\om(b,k) :=  E_n^\om \cap T_\om^{-k}([b]_{\te^k\om}).
\end{align*}
By construction, it then follows that 
\begin{align*}
&\mu_{\om,s}(A_\om^{k,b}) \\
  = & \frac{1}{P_\om(s)} \left(
  \sum_{\genfrac{}{}{0pt}{}{n \in J_\om(\widetilde{\Om}), n \leq k,}{x \in E_n^\om(b,k)}} s^n e^{\phi_n^\om(x)}  + 
 \sum_{x \in E_{k+1}^\om(b,k)} s^{k+1} e^{\phi_n^\om(x)} + 
 \sum_{\genfrac{}{}{0pt}{}{n \in J_\om(\widetilde{\Om}), n \geq k+2,}{x \in E_n^\om(b,k)}} s^n e^{\phi_n^\om(x)} \right)\\
 =: &\frac{1}{P_\om(s)} \left( \Sigma_1^\om(b) + \Sigma_2^\om(b) + \Sigma_3^\om(b)\right). 
\end{align*}
For the third summand, one immediately obtains that
\begin{align*}
\frac{\Sigma_3^\om(b)}{P_\om(s)} & \leq \frac{s^{k+1}}{P_\om(s)} \sum_{n \in J_\om(\widetilde{\Om}), n \geq k+2} s^{n-(k+1)} \\
& \phantom{\leq} \cdot \sum_{w \in \cW_\om^{k}\with wb \in \cW_\om^{k+1}} e^{\phi_k^\om([wb])}   \sum_{x \in E_{n-(k+1)}^{\te^{k+1}\om} \cap T_{\te^k\om}([b]_{\te^k\om})}   e^{\phi_{n-(k+1)}^{\te^{k+1}\om}(x)}\\
 & \leq   \frac{s^{k+1} P_{\te^{k+1}\om}(s)}{P_\om(s)} \left( \sum_{w \in \cW_\om^{k}\with wb \in \cW_\om^{k+1}} e^{\phi_k^\om([w])} \right) e^{\phi^{\te^k \om}([b])} \mu_{\te^{k+1}\om, s }(T_{\te^k\om}([b]_{\te^k\om}))\\
 & \leq \frac{s^{k+1} P_{\te^{k+1}\om}(s)}{P_\om(s)} \left(\prod_{l=0}^{k-1} M_{\te^l\om}\|\right)  e^{\phi^{\te^k \om}([b])} \leq \left( {\textstyle \prod_{l=0}^{k-1} \frac{M_{\te^l\om}}{m_{\te^l\om}}  }\right) e^{\phi^{\te^k \om}([b])},
 \end{align*}
 where the last inequality follows from (S2) and (\ref{eq:estimate_quotient}). By the same arguments, it follows that  
 \[ \Sigma_2^\om(b) \leq \left( {\textstyle \prod_{l=0}^{k-1}  M_{\te^l\om}} \right) e^{\phi^{\te^k \om}([b])}. \]
 Finally, for $n = 1, \ldots, k$, note that the set $E_n^\om \cap T_\om^{-k}([b]_{\te^k\om})$ is nonempty for at most one $b \in \cW^1_{\te^k\om}$. Hence, there exists $c_{\om,k} \leq \infty$ with $ \sum_{b > c_{\om,k}} \Sigma_1^\om(b) =0$.

For  a given $\epsilon > 0$, choose  a triple $(C, s_0, \Om')$ with $C>0$, $s_0 \in (0,1)$ and $\Om' \subset \Om$ such that $P(\Om')>1-\epsilon$ and  $P_\om(s)\geq C$ for all $s\geq s_0$, $\om \in \Om'$. For $\om \in \Om'$ and $c \geq c_{\om,k}$, we hence have that 
\begin{align} \nonumber
\sum_{b\geq c} \mu_{\om,s}(A_\om^{k,b}) & =  \frac{1}{P_\om(s)} \sum_{b\geq c}  \left(\Sigma_2^\om(b) + \Sigma_3^\om(b)\right)\\
 & \leq \left( C^{-1}{\textstyle \prod_{l=0}^{k-1}  M_{\te^l\om}} + {\textstyle \prod_{l=0}^{k-1} \frac{M_{\te^l\om}}{m_{\te^l\om}}} \right) \sum_{b\geq c}  e^{\phi^{\te^k \om}([b])} \label{eq:independence_of_s} \\
 & \leq B_{\te^{k+1} \om} \left( C^{-1}{\textstyle \prod_{l=0}^{k-1}  M_{\te^l\om}}+ {\textstyle \prod_{l=0}^{k-1} \frac{M_{\te^l\om}}{m_{\te^l\om}}} \right) M_{\te^k\om} < \infty.  \label{eq:summable}
\end{align}
Combining the summability in (\ref{eq:summable}) with the independence of the estimate from $s$ in (\ref{eq:independence_of_s}) then gives rise to the existence of $c^\ast_{\om,k} \geq c_{\om,k}$, for $\om \in \Om'$ and $k\in \bbN$,  such that $c^\ast_{\om,k} < \infty$ and 
\[ \sum_{b\geq c^\ast_{\om,k}}  \mu_{\om,s}(A_\om^{k,b}) \leq  \frac{\epsilon}{2^k}.\]
Observe that $\om \to c^\ast_{\om,k}$ might chosen to be measurable, which can be seen e.g. by construction of $c^\ast_{\om,k}$ as a maximum. For
$ K:= \{(\om,(x_0,x_1,\ldots))\with \om \in \Om', x_k < c^\ast_{\om,k}\}$
it then follows that
\begin{align*}
\int \mu_{\om,s}(K^c)dP &\leq \epsilon  + \int_{\Om'} \mu_{\om,s}(K^c)dP \\
& = \epsilon  + \int_{\Om'} \mu_{\om,s}(\{(x_0, \ldots) \with\exists k \hbox{ s.t. }x_k \geq c^\ast_{\om,k}\})dP  \\
&\leq \epsilon  + \int_{\Om'} 
 \sum_{\genfrac{}{}{0pt}{}{k=1, \ldots, \infty}{b \geq c^\ast_{\om,k} }} \mu_{\om,s}(A_\om^{k,b})dP   \leq \epsilon  +  P(\Om') \sum_{k=1}^\infty \frac{\epsilon}{2^k} \leq 2 \epsilon.
\end{align*}
Since $K\cap X_\om$ is compact for a.e. $\om \in \Om'$, the assertion follows. \end{proof}

As an immediate consequence of Crauel's relative Prohorov theorem we obtain that there exist a sequence $(s_n)$ with $s_n \nearrow 1$ and a random probability measure $\{\mu_\om\}$ such that   
\[ \lim_{n\to\infty}\int f d\mu_{\om,s_n} dP(\om)= \int f d\mu_\om dP(\om)\]
for all $f \in {\mathcal L}^C_1(P)$ where
$ {\mathcal L}^C_1(P):=\{f:X\to \bbR:\; f\arrowvert_{X_\om}\in C(X_\om),
\int \|f_{|X_\om}\|_\infty dP(\om)<\infty\}$ and $C(X_\om)$ denotes the set of continuous functions defined on $X_\om$. The following theorem is now stated without the assumption that $P_G(\phi)=0$.

\begin{theorem}\label{theo:conformal} Let $(X,T,\phi)$ be a topologically mixing system of divergence type with (H2), S(1-2) and $P_G(\phi)>-\infty$.
 Then there exists  a random probability measure $\{\mu_\om\}$ such that 
 $\mu_\om$ is positive on cylinders for a.e. $\om \in \Om$ and, for $x \in X_\om$, 
\[ \frac{d\mu_{\te \om}\circ T_\om}{d\mu_\om}(x) = \la(\om)^{-1} e^{P_G(\phi)-\phi^\om(x)}.\]
\end{theorem}

\begin{proof} By substituting $\phi$ with $\phi - P_G(\phi)$, we may assume without loss of generality that $P_G(\phi)=0$.
Let $(t_k : k \in \bbN)$ be the sequence given by Lemma 4.1. By Lemma 4.2, there exists a subsequence $(s_n:n \in \bbN)$ of $(t_k)$ and a random probability measure $\{\mu_\om: \om \in \Om \}$ which is the limit of $\{\mu_{\om,s_n} \}$. For $k \in \bbN$,  $a \in \cW^k_\om$ and $A \subset [a]_\om$ such that $A=[ab]_\om$ for some $b \in \cW_{\te^k\om}^\ast$, it follows as in the proof of proposition 6.3 in \cite{DenkerKiferStadlbauer:2008} that   
  \begin{align*} \mu_{\om,s}(A)  & =
   \frac{1}{P_\om(s)}\sum_{ \genfrac{}{}{0pt}{}{k \in J_\om(\widetilde{\Om}), k\leq n }{ x \in E^{k}_{\om}\cap A}}   
 s^k e^{\phi^k_\om(x)} \\
 & + \frac{P_{\te^k\om}(s)}{P_\om(s)}\frac{1}{P_{\te^k\om}(s)}
   \sum_{\genfrac{}{}{0pt}{}{n \in J_\om(\widetilde{\Om}), k>n, }{ x \in E^{k-n}_{\te^n\om}\cap T^n_\om(A)}}  s^k e^{\phi_n^\om(\tau_a(x))}  e^{\phi_{k-n}^{\te^n\om}(x)}.
   \end{align*}
Observe that the first summand tends to zero when $s \to \rho$. Hence, for  $B \subset \Om_a$, we obtain by integration that
\begin{eqnarray*} 
 \displaystyle \int_B \mu_{\om}(A) dP(\om)  &=&  \int_B \lim_{n \to \infty} \mu_{\om,s_n}(A) dP(\om) \\
&=& \displaystyle   \lim_{n \to \infty} 
\int_B \frac{P_{\te^k\om}(s_n)}{P_\om(s_n)} \int_{T_\om^k(A)} e^{\phi_n^\om(\tau_a(x))} d\mu_{\te^k\om,s_n}(x) dP(\om)
 \end{eqnarray*}
By considering  a subset of $B$, we may assume that $M_\om M_{\te\om} \cdots M_{\te^{k-1}\om}$ is uniformly bounded from above on $B$. Then,
\begin{eqnarray*} 
(\star)& := & \int_B  \La_k(\om) \int_{T_\om^k(A)} e^{\phi_n^\om \circ \tau_a} d\mu_{\te^k\om} dP  - \int_B \mu_{\om}(A) dP\\
&=&  
\lim_{n \to \infty} \int_B \left( \frac{P_{\te^k\om}(s_n)}{P_\om(s_n)} - \La_k(\om) \right) \int_{T_\om^k(A)} e^{\phi_n^\om \circ \tau_a} d\mu_{\te^k\om} dP \\
& &+  
\lim_{n \to \infty} \int_B  \frac{P_{\te^k\om}(s_n)}{P_\om(s_n)} \int_{T_\om^k(A)} e^{\phi_n^\om \circ \tau_a} d(\mu_{\te^k\om,s_n}  - \mu_{\te^k\om})  dP. \\
&\geq &  \lim_{n \to \infty} \int_B \left( \log \frac{P_{\te^k\om}(s_n)}{P_\om(s_n)  \La_k(\om)} \right) \La_k(\om) \int_{T_\om^k(A)} e^{\phi_n^\om \circ \tau_a} d\mu_{\te^k\om} dP  =0.
 \end{eqnarray*} 
By the same argument, it follows that $(\star) \leq 0$. Now observe that, since $X$ is a subset of the product space $\bbN^\bbN \times \Om$, the measure $m$ defined by $m(A) := \int \mu_{\te\om}(T_\om(A)) dP$ for sets $A$ such that $T|_A$ is one-to-one can be disintegrated with respect to $P$. Hence, by uniqueness of the Radon-Nikodym derivative, it follows that 
\begin{equation} \nonumber 
\frac{d\mu_{\te^k \om}\circ T^k_\om}{d \mu_\om}(x) = e^{-\phi^\om_k(x)} \La_{k}(\om)^{-1}.  \end{equation} 
In order to show that $\{\mu_\om\}$ is positive on cylinders, choose $a \in \cW$ and $\Om' \subset \Om_a$ of positive measure such that $\mu_\om([a]_\om)>0$ for all $\om \in \Om'$. It follows, for a.e. $\om \in \Om$, $n \in \bbN$ and $w \in \cW^n_\om$, that there exists $k > n$ such that $\te^{k}\om \in \Om'$ and $T_\om^{k}([w]) \supset [a]_{\te^k\om}$. In particular, since ${d\mu_{\te^k \om}\circ T^k_\om}/{\mu_\om}(x) >0$ for all $x \in [w]_\om$, we have $\mu_\om([w])>0$.
\end{proof}

\begin{remark} \label{rem:missing_statement}
The theorem no longer states that $P_\om(s_n)/P_{\te \om}(s_n) \xrightarrow{n \to \infty} \la(\om)$ (theorem 4.1 in \cite{Stadlbauer:2010}). Since the statement also follows from a stronger result in the forthcoming paper \cite{Stadlbauer:2013b}, its proof will be omitted. Since the convergence was used in the proof of corollary \ref{cor:pr}, we give an independent and much simpler proof of that result.
\end{remark}

\begin{remark} \label{rem:conformal}
 For $a \in \cW_\om$ and $A \subset [a]_\om$, the above result implies that 
\[ \mu_{\te\om}(T_\om(A)) =  \la(\om) \int_A e^{P_{G}(\phi) - \phi^\om} d\mu_\om \]
for a.e. $\om \in \Om$. Hence $\{\mu_\om\}$ is a {$(\la \exp(P_{G}(\phi) - \phi))$-random conformal measure}. Moreover, note that conformality gives a characterization of $\{\mu_\om\}$ as eigenmeasure of the dual $(L^\phi_\om)^\ast$ which acts on the space of Radon measures (see e.g. \cite{DenkerKiferStadlbauer:2008}), that is,
\[ (L^\phi_\om)^\ast(\mu_{\te\om}) = \la(\om)e^{P_G(\phi)}\mu_\om.\]
\end{remark}
\begin{remark} Now assume that $\phi$ has property (H1). For $a=(a_0, \ldots, a_{n-1}) \in \cW^n_\om$ we  then immediately obtain an estimate 
 for $\mu_\om([a]_\om)$ in terms of the measure of $T_{\te^{n-1}\om}([a_{n-1}]_\om)$. Set $\La_n(\om) := \la(\om)\cdot \la(\te \om)\cdots \la(\te^{n - 1}\om)$. We then have
\[\frac{1}{B_{\te^n\om}}   \mu_{\te^n\om}( T_{\te^{n-1}\om}([a_{n-1}]_\om)) \leq \La_n(\om) \frac{\mu_\om([a]_\om)}{e^{\phi^n_\om(x)-nP_G(\phi)}}  \leq B_{\te^n\om} \mu_{\te^n\om}( T_{\te^{n-1}\om}([a_{n-1}]_\om)),\]
 for all $x \in [a]_\om$. If $(X,T)$ has the big image property, then  
\[ D_{\om}:= \inf\{ \mu_{ \om}( T_{\te^{-1} \om}([b]_{\te^{-1} \om}))  \with b \in \cW^1_{\te^{-1} \om}\} >0\]
for all  $\om  \in \Om_{\hbox{\tiny bi}}$. Hence, for a.e. $\om \in \Om$ and $n$ with $\te^{n}\om \in \Om_{\hbox{\tiny bi}}$, 
\[ (B_{\te^n\om})^{-1}  D_{\te^n\om}  \leq \La_n(\om) \frac{\mu_\om([a]_\om)}{e^{\phi^n_\om(x)-nP_G(\phi)}}  \leq B_{\te^n\om},\]
which is a natural analogue of the Gibbs property for random topological Markov chains.
\end{remark}

We proceed with applications of the above theorem to systems with the b.i.p.-property. In this case, by Theorem \ref{theo:ergodic}, the system is of divergence type and the above theorem is applicable. 

\begin{corollary} \label{cor:pr} If $(X,T,\phi)$ has the b.i.p.-property and (H$^\ast$) and (S1-2) hold, then there exist positive measurable functions $K, K^\ast:\Om_a \to \bbR$, $\cN:\Om_a \to \bbN$ such that, for all $\om \in \Om_a$ and $n\geq \cN(\om)$ with  $\om\in \Om_a\cap\te^{n}\Om_a$,
\[ K(\om) K({\te^{-n}\om}) \leq\frac {Z_n^{\te^{-n}\om}(a)}{\La_n(\te^{-n}\om) e^{nP_G(\phi)}}\leq K^\ast(\om).   \]
\end{corollary}

\begin{proof}Since 
$\sum_{w \in \cW^n_\om} \mu_\om([w])=1$, it follows from 
 remark \ref{rem:conformal} and the definition of $A_n^\om$ that, for a.e. $\om$ and $n \in \bbN$ with $\te^n \om \in \Om_{\hbox{\tiny bi}}$, 
\[ 
B_{\te^n \om }^{-1} \leq
\frac{A_n^\om}{ \La_n(\om)  e^{n P_G(\phi)}} =
\frac{\sum_{w \in \cW_\om^n} e^{\phi_n^\om([w])} }{ \La_n(\om)  e^{n P_G(\phi)}} 
\leq  B_{\te^n \om } D_{\te^n(\om)}^{-1} 
.\]
The corollary is then a consequence of lemma \ref{lem:partition_functions}.
\end{proof}

By choosing a subset $\Om_r$ of $\Om_a$ for which $K(\om)$ is uniformly bounded, it immediately follows that there exists $\widetilde{K}:\Om_r \to \bbR$, $\widetilde{K}>0$ with  
\[ \widetilde{K}^{-1}(\om) \leq\frac {Z_n^{\te^{-n}\om}(a)}{\La(\te^{-n}\om)}\leq \widetilde{K}(\om),\]
for a.e. $\om \in \Om_r$ and $n \geq \cN(\om)$ with $\te^{-n}\om \in \Om_r$. In particular, $(X,T,\phi)$ is positive recurrent as introduced in \cite{DenkerKiferStadlbauer:2008} which is a sufficient condition for the relative Ruelle-Perron-Frobenius theorem (cf. Theorem 5.3 in there). For the definition of relative exactness in the following statement, we refer to \cite{Guivarch:1989,DenkerKiferStadlbauer:2008}.

\begin{theorem}[cf. \cite{DenkerKiferStadlbauer:2008}] \label{theo:rpf}  Assume that $(X,T,\phi)$ has the b.i.p.-property and (H $^\ast$) and S(1-2) hold. Then there exists a measurable family of functions $(h^\om : \om \in \Om)$ such that, for $\mu$ and $\la$ given by Theorem \ref{theo:conformal}, the following holds.
\begin{enumerate}
\item For a.e. $\om \in \Om$, $h^\om:X_\om \to \bbR$ is a positive, $1$-H\"older continuous function which is bounded from above and below on cylinders.
\item For a.e. $\om \in \Om$, we have $ L^\om_\phi h^\om=\la(\om)e^{P_G(\phi)}h^{\te\om}$, $\int h^\om \mu_\om=1$.
\item The random topological Markov chain is relatively exact with respect to $(\mu_\om)$. In particular,
for $\{f^\om:\om \in \Om'\}$ with $f^\om  \in L^1(\mu_\om)$ for a.e. $\om \in \Om$, we have
\[  \lim_{n \to \infty} \left\| \frac{L^{\om,n}_{\phi}f^\om}{\La_n(\om)e^{nP_G(\phi)}} - h^{\te^n\om}\int f^\om d\mu_\om\right\|_{L^1(\mu_{\te^n \om})} =0. \]  
\item The probability measure given by $h^\om d\mu_\om dP$ is $T$-invariant and ergodic.
\end{enumerate}
\end{theorem}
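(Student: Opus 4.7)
The plan is to reduce the statement to the relative Ruelle--Perron--Frobenius theorem (Theorem 5.3 of \cite{DenkerKiferStadlbauer:2008}) by verifying that $(X,T,\phi)$ is positive recurrent in the sense used there. Under the hypotheses, Corollary \ref{cor:pr} already gives a two-sided control on $Z_n^{\te^{-n}\om}(a)/(\La_n(\te^{-n}\om)e^{nP_G(\phi)})$ along returns, but with an $\om$-dependent lower constant $K(\te^{-n}\om)K(\om)$. The first step is therefore to extract a subset $\Om_r \subset \Om_a$ of positive $P$-measure on which $K$ is bounded below by a fixed $\epsilon > 0$, so that for a.e.\ $\om \in \Om_r$ and all sufficiently large $n$ with $\te^{-n}\om \in \Om_r$ we have
\[ \widetilde K(\om)^{-1} \leq \frac{Z_n^{\te^{-n}\om}(a)}{\La_n(\te^{-n}\om)\,e^{nP_G(\phi)}} \leq \widetilde K(\om).\]
This is precisely the ``positive recurrence'' condition required as input by Theorem 5.3 of \cite{DenkerKiferStadlbauer:2008}.

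Next I would construct $h^\om$ as the pointwise (or weak) limit of normalized transfer operator iterates of the form
\[ h^\om(x) = \lim_{k\to\infty} \frac{L^{\te^{-n_k}\om,\,n_k}_\phi(\mathbf{1}_{[a]})(x)}{\La_{n_k}(\te^{-n_k}\om)\,e^{n_k P_G(\phi)}},\]
where $(n_k)$ is a sequence of return times to $\Om_r$. The partition function estimates from Corollary \ref{cor:pr} guarantee that this limit is positive and bounded on cylinders (after passing to a subsequence if necessary), while the local Hölder regularity in (H$^\ast$) together with the basic estimate (\ref{eq:key}) yields $1$-Hölder continuity on fibers, giving assertion (i). The functional equation $L^\om_\phi h^\om = \la(\om)e^{P_G(\phi)}h^{\te\om}$ then follows from the semigroup identity $L^\om_\phi \circ L^{\te^{-n}\om,n}_\phi = L^{\te^{-n}\om,n+1}_\phi$ combined with the fact, established in the proof of Theorem \ref{theo:conformal}, that $\la(\om) = \lim_k P_\om(s_k)/P_{\te\om}(s_k)$. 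Normalization $\int h^\om d\mu_\om = 1$ can then be arranged by rescaling along the base, using the conformality property of $\mu_\om$ (Remark \ref{rem:conformal}) to verify that $\om \mapsto \int h^\om d\mu_\om$ is $\te$-invariant and almost surely finite and positive; this proves (ii).

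For (iii), the relative exactness statement, the argument of Theorem 5.3 in \cite{DenkerKiferStadlbauer:2008} applies verbatim once positive recurrence is established: one couples the iterates $L^{\om,n}_\phi f^\om/(\La_n(\om)e^{nP_G(\phi)})$ against $h^{\te^n\om}\int f^\om d\mu_\om$ using the Gibbs-type estimate from the remark following Theorem \ref{theo:conformal}, which controls $\mu_\om$-measures of cylinders uniformly on returns to $\Om_{\mathrm{bi}}$, and then exploits the b.i.p.-property to mix contributions from distinct cylinders. Assertion (iv) is a formal consequence: the measure $h^\om d\mu_\om dP$ is $T$-invariant by direct computation using the eigenequation and conformality, and its ergodicity follows from the $L^1$-convergence in (iii) applied to indicators of $T$-invariant sets. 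The main obstacle is verifying that our (H$^\ast$) + (S1-2) setting matches the hypotheses of \cite{DenkerKiferStadlbauer:2008} cleanly, in particular that the slightly different normalization of the partition function $Z_n^\om(a)$ used here (evaluated on $[wa]$ rather than $[w]$) still feeds correctly into their coupling argument; this should cost at most factors of $B_\om$ which are absorbed thanks to the integrability $\log B_\om \in L^1(P)$.
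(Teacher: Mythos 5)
Your proposal is correct and follows essentially the same route as the paper: extract a positive-measure subset $\Om_r$ of $\Om_a$ on which $K$ is uniformly bounded to upgrade Corollary \ref{cor:pr} to positive recurrence in the sense of \cite{DenkerKiferStadlbauer:2008}, and then invoke Theorem 5.3 together with Propositions 7.3 and 7.4 of that paper. The additional detail you supply (construction of $h^\om$ as normalized limits of $L^{\te^{-n_k}\om,n_k}_\phi$, the eigenequation, exactness, and ergodicity) is a faithful sketch of the cited results rather than a new argument, so there is nothing further to flag.
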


\begin{proof} These are immediate consequences of  Theorem 5.3,  Proposition 7.3 and Proposition 7.4 in \cite{DenkerKiferStadlbauer:2008}.
\end{proof}

\begin{remark} \label{rem:finite} Recall e.g. from \cite{BogenschutzGundlach:1995}, that a random subshift of finite type is a random topological Markov chain with $\ell_\om < \infty$. Now assume that a random subshift $(X,T)$ of finite type is topologically mixing and has properties (H2) and (S1-2). Clearly, $(X,T)$ has the b.i.p.-property. Moreover, it easily can be seen that $V_1^\om(\phi) < \infty$.
Hence the above Theorem is an extension of Ruelle's theorem in \cite{BogenschutzGundlach:1995}. 
\end{remark}

Furthermore, by considering a potential which is constant on cylinders of length two, we obtain a Perron-Frobenius-theorem for the following class of random matrices. Let
$A = \{A_\om \with \om \in \Om\}$ with  $A_\om=\big(p_{ij}^\om,\, i < \ell_\om,j < \ell_{\te\om}\big) $ and  $p_{ij}\geq 0$ a.s. be a measurable family of random matrices. We then refer to $A$ as 
a summable random matrix with the b.i.p.-property if
\begin{enumerate}
  \item the signum of $A$ gives rise to a random topological Markov chain with the b.i.p.-property, 
  \item for a.e. $\om \in \te^{-1}(\Om_{\textrm{\tiny bi}} \cup \Om_{\textrm{\tiny bp}})$, 
  we have 
  \[\sup \left\{ \frac{p_{ij}^\om}{p_{ik}^\om} \with i<\ell_\om, j,k < \ell_{\te\om}, p_{ik}^\om \neq 0 \right\}< \infty,\] 
  \item there exist positive random variables $\om \mapsto m_\om$ and $\om \mapsto M_\om$ with $\log m, \log M \in L^1(P)$ such that, for a.e. $\om \in \Om$,
  \[m_\om \leq \inf_{j < \ell_{\te\om}} \sum_{i < \ell_\om} p_{ij}^\om \leq \sup_{j < \ell_{\te\om}} \sum_{i < \ell_\om} p_{ij}^\om \leq M_\om. \]
\end{enumerate}
By viewing $A$ as a locally constant potential we arrive at the following random Perron-Frobenius theorem. Below, $\bbR^{\infty-1}$ stands for $\bbR^\bbN$, and $(B)_{ij}$ for the coefficient of the matrix $B$ with coordinates $(i,j)$.

\begin{corollary} \label{cor:pf}
For a summable random matrix $A$ with the b.i.p.-property, there exist a positive random variable $\la:\Om \to \bbR$ and strictly positive random vectors $h=\{h^\om \in \bbR^{\ell_\om - 1} \with \om \in \Om\}$ and $\mu=\{\mu^\om \in \bbR^{\ell_{\te\om} - 1} \with \om \in \Om\}$ such that, for a.e. $\om \in \Om$,
\[ (h^\om)^t A^\om = \la(\om) h^{\te\om}, \quad  A^\om \mu^{\te \om} =  \la(\om)\mu^{\om}, \quad (h^\om)^t\mu^\om =1.\]
Furthermore, for a.e. $\om \in \Om$ and $i < \ell_\om$, we have 
\[\lim_{n\to \infty} \sum_{j < \ell_{\te^{n}\om}} \left| \frac{(A^\om\cdot A^{\te\om} \cdots  A^{\te^{n-1}\om})_{ij}}{\La_n(\om)} - \mu^\om_i h^{\te^{n}\om}_j\right| \mu^{\te^{n}\om}_j =0. \] 
\end{corollary}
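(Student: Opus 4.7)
The plan is to set $\phi^\om(x) := \log p^\om_{x_0 x_1}$ and reduce the corollary to an application of Theorem~\ref{theo:rpf}. The random topological Markov chain $(X,T)$ given by the signum of $A$ is topologically mixing with the b.i.p.-property by hypothesis~(i). Since $\phi^\om$ depends only on $(x_0, x_1)$, $V^\om_n(\phi)=0$ for $n\geq 2$, so $\phi$ is $2$-Hölder with $\ka\equiv 1$, giving~(H2). Hypothesis~(ii) yields $V^\om_1(\phi)=\log\sup_{i,j,k}(p^\om_{ij}/p^\om_{ik}) < \infty$ on $\te^{-1}(\Om_{\textrm{\tiny bi}}\cup\Om_{\textrm{\tiny bp}})$, verifying~(H$^\ast$); and the direct computation $L^\om_\phi 1(x)=\sum_{i<\ell_\om}p^\om_{i,x_0}$ shows that hypothesis~(iii) is exactly (S1)--(S2). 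Theorem~\ref{theo:rpf} then produces $\lambda_0$, $h^\om$ and $\mu_\om$ satisfying the eigenfunction and conformal identities; after setting $\la(\om) := \lambda_0(\om) e^{P_G(\phi)}$, the factor $\La_n(\om) = \prod_{k=0}^{n-1}\la(\te^k\om)$ matches the statement's normalisation.

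The main obstacle is to show that, for a.e.\ $\om$, the eigenfunction $h^\om$ is constant on each non-empty cylinder $[j]_\om$. Iterating the eigenfunction equation $n$ times and taking differences, one obtains for $x, y \in X_\om$ with $x_0 = y_0$
\[\La_n(\te^{-n}\om)\bigl(h^\om(x) - h^\om(y)\bigr) = \sum_{z_0,\ldots,z_{n-1}}\Bigl(\prod_{k=0}^{n-1}p^{\te^{-n+k}\om}_{z_k z_{k+1}}\Bigr)\bigl(h^{\te^{-n}\om}(z,x) - h^{\te^{-n}\om}(z,y)\bigr),\]
where $z_n=x_0=y_0$ and $(z,x)$ abbreviates $(z_0,\ldots,z_{n-1},x_0,x_1,\ldots)$. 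The $1$-Hölder bound for $h$ gives $|h^{\te^{-n}\om}(z,x) - h^{\te^{-n}\om}(z,y)| \leq \ka^h(\te^{-n}\om)\,r^{n+1}$ since $(z,x)$ and $(z,y)$ agree on their first $n+1$ coordinates, and the remaining $z$-sum of products equals $L^{\te^{-n}\om,n}_\phi 1(x)$; hence
\[|h^\om(x) - h^\om(y)| \leq \ka^h(\te^{-n}\om)\,r^{n+1}\cdot\frac{L^{\te^{-n}\om,n}_\phi 1(x)}{\La_n(\te^{-n}\om)}.\]
The ergodic theorem applied to $\log\ka^h \in L^1(P)$ forces $\ka^h(\te^{-n}\om)\,r^{n+1} \to 0$ almost surely, while Theorem~\ref{theo:rpf}(iii) with starting fibre $\te^{-n}\om$ and $f\equiv 1$ gives $L^{\te^{-n}\om,n}_\phi 1/\La_n(\te^{-n}\om) \to h^\om$ in $L^1(\mu_\om)$. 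Passing to a common subsequence along which the latter convergence is $\mu_\om$-a.e.\ yields $h^\om(x) = h^\om(y)$ for $\mu_\om$-a.e.\ $x$ and every $y \in X_\om$ with $y_0 = x_0$. Combining this with continuity of $h^\om$ and the positivity of $\mu_\om$ on cylinders (Theorem~\ref{theo:conformal}) extends the identity to every $x \in X_\om$, so that $h^\om_j$, defined as the common value of $h^\om$ on $[j]_\om$, is well-defined.

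With $h^\om_j$ as above and $\mu^\om_j := \mu_\om([j]_\om)$, the eigenfunction equation unfolds as $\sum_i p^\om_{i,x_0}h^\om_i = \la(\om) h^{\te\om}_{x_0}$, which is $(h^\om)^t A^\om = \la(\om) h^{\te\om}$. Remark~\ref{rem:conformal} applied to the length-$2$ cylinder $[i,j]_\om$, on which $\phi^\om \equiv \log p^\om_{ij}$, gives $\mu_\om([i,j]_\om) = \la(\om)^{-1}p^\om_{ij}\mu^{\te\om}_j$; summing over admissible $j$ yields $A^\om\mu^{\te\om} = \la(\om)\mu^\om$, and $1 = \int h^\om d\mu_\om = \sum_j h^\om_j \mu^\om_j = (h^\om)^t\mu^\om$ follows immediately. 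Finally, Theorem~\ref{theo:rpf}(iii) applied to $f^\om := 1_{[i]_\om}$, for which $L^{\om,n}_\phi 1_{[i]}(x) = (A^\om\cdots A^{\te^{n-1}\om})_{i,x_0}$ depends only on $x_0$ and $\int f^\om d\mu_\om = \mu^\om_i$, converts the stated $L^1(\mu_{\te^n\om})$-convergence directly into the claimed sum-convergence by expanding $\int g\, d\mu_{\te^n\om} = \sum_j g(x)\vert_{x\in[j]_{\te^n\om}}\,\mu^{\te^n\om}_j$ for functions $g$ of $x_0$ alone.
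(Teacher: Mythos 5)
Your overall reduction is exactly the paper's: define $\phi^\om(x)=\log p^\om_{x_0x_1}$, check (H2), (H$^\ast$) and (S1--2) from hypotheses (i)--(iii), apply Theorem \ref{theo:rpf}, and read off the matrix identities and the limit by testing against indicators of $1$-cylinders. Your verification of the hypotheses, the derivation of $(h^\om)^tA^\om=\la(\om)h^{\te\om}$ from the eigenfunction equation, the use of Remark \ref{rem:conformal} on $2$-cylinders to get $A^\om\mu^{\te\om}=\la(\om)\mu^\om$, and the final application of Theorem \ref{theo:rpf}(iii) to $1_{[i]_\om}$ all match the paper and are correct.

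The one place you genuinely diverge is the proof that $h^\om$ is constant on $1$-cylinders, and there your argument has a gap. The paper disposes of this in one line: since $\phi$ depends only on the first two coordinates, $L^\om_\phi$ maps functions that are constant on cylinders to functions that are constant on $1$-cylinders, and the eigenfunction of Proposition 7.3 in \cite{DenkerKiferStadlbauer:2008} is constructed as a limit of such images of the constant function, so constancy is inherited from the construction. Your direct argument from the iterated eigenfunction equation instead requires the factor $L^{\te^{-n}\om,n}_\phi 1(x)/\La_n(\te^{-n}\om)$ to stay bounded along a subsequence; the crude estimate by $\prod_{k=1}^{n}M_{\te^{-k}\om}/\prod_{k=1}^{n}m_{\te^{-k}\om}$ grows like $\exp\bigl(n\int\log(M_\om/m_\om)\,dP\bigr)$ and can beat $r^{n+1}$, so some finer input is indeed needed. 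You supply it by invoking Theorem \ref{theo:rpf}(iii) ``with starting fibre $\te^{-n}\om$'', but that theorem is stated forward in time: for a \emph{fixed} base point $\om'$ it controls $L^{\om',n}_\phi$ in $L^1(\mu_{\te^n\om'})$ as $n\to\infty$. What you need is the diagonal statement at $\om'=\te^{-n}\om$ and time $n$ simultaneously, for a.e.\ $\om$, and this does not follow from the quoted formulation (it is essentially the backward, natural-extension form of relative exactness). Either import that backward form explicitly, or derive a bound on $L^{\te^{-n}\om,n}_\phi 1/\La_n(\te^{-n}\om)$ from the Gibbs-type estimates following Theorem \ref{theo:conformal}, or --- simplest --- replace the whole paragraph by the paper's observation that constancy on $1$-cylinders is built into the construction of $h$.
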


\begin{proof} Let $(X,T)$ be the random topological Markov chain given by the signum of $A$ and, for $x \in [a_0a_1]_\om$, set $\phi^\om(x):= \log p_{a_0a_1}^\om$. Then $\phi$ is 2-Hölder continuous and, by condition (ii), is 1-Hölder continuous for $\om \in \te^{-1}(\Om_{\textrm{\tiny bi}} \cup \Om_{\textrm{\tiny bp}})$. As a consequence of the summability assumption (iii), it then follows that Theorem \ref{theo:rpf} is applicable to $(X,T,\phi)$. So let $\la'$, $h'$ and $\mu'$ be given by this result. The random variable $\la$ is then defined  by $\la := \la' e^{P_G(\phi)}$. Furthermore, since $L_\phi$ acts on functions which are constant on cylinders, it follows by the construction of the eigenfunction in Proposition 7.3 in \cite{DenkerKiferStadlbauer:2008} that $h'$ is constant on cylinders of length 1. Hence, with $h$ given by $h^{\om}_a := h'|_{[a]_\om}$, we have that, for a.e. $\om \in \Om$ and $x \in [b]_{\te\om}$, 
\[ ((h^\om)^t A^\om)_b = L_\phi^\om(h')(x) = \la(\om) h'(x) =   \la(\om) h^{\te \om}_b.\]
Furthermore, for $\mu$ given  by $\mu^{\om}_a := \mu'_\om([a]_\om)$, the identity $ A^\om \mu^{\te \om} =  \la(\om)\mu^{\om}$ follows by similar arguments. The remaining assertion is an application of Theorem \ref{theo:rpf} (iii) to the indicator function $1_{[a]_\om}$.
\end{proof}

As a concluding remark, we give an application of our results to the existence of a stationary vector (or stationary distribution) for a stationary Markov chain with countably many states in a stationary environment. 
Recall that such a Markov chain is given by a \emph{random stochastic matrix} $A= \{(p^\om_{ij} :i < \ell_\om,j < \ell_{\te\om})\with \om \in \Om\}$, that is, $\sum_{j < \ell_{\te\om}} p^\om_{ij}=1$ for every $i < \ell_{\te\om}$ and  a.e. $\om \in \Om$, where $p^\om_{ij}\geq  0$ stands for the random transition probability from state $i$
to $j$. Moreover, a random vector $\pi=\{ (\pi_i^\om \with i < \ell_\om) \with \om \in \Om \}$ is called \emph{random stochastic vector} if $\pi \geq 0$ and  $\sum_i \pi_i^\om =1$ for a.e. $\om \in \Om$. If, in addition, $\pi^\om A^\om = \pi^{{\te\om}}$ for a.e. $\om \in \Om$, then $\pi$ is referred to as a 
\emph{random stationary distribution}.
\begin{corollary}\label{theo:stochastic:matrix} 
Assume that $A$ is a  random stochastic matrix such that
\begin{enumerate}
  \item the signum of the transpose $A^t$ of $A$ defines a random topologically mixing topological Markov chain with the big preimages property,
  \item for a.e. $\om \in \te(\Om_{\textrm{\tiny bi}})$, we have 
   \[\sup \left\{ \frac{p_{ji}^\om}{p_{ki}^\om} \with j,k < \ell_{\om},, i<\ell_{\te\om},  p_{ki}^\om \neq 0 \right\}< \infty.\]
\end{enumerate}
Then there exists a unique random stationary distribution $\pi$. In particular, $\pi >0$ 
and, for an arbitrary random vector $\{(f^\om_i: i < \ell_\om)\with \om \in \Om\}$ with $\sum_i |f^\om_i| \pi^\om_i < \infty$, we have
\[\lim_{n\to \infty} \sum_{j < \ell_{\te^{-n}\om}} \pi^{\te^{-n}\om}_j  \left| {(A^{\te^{-n}\om} \cdots A^{\te^{-2}\om} \cdot  A^{\te^{-1}\om} f^\om)_{j}} - \sum_i f^\om_i \pi^\om_i  \right| =0.  \]
\end{corollary}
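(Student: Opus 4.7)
The plan is to derive the statement from Corollary \ref{cor:pf} applied to a time-reversed, transposed matrix family. First, set $\tilde\te := \te^{-1}$, which is still an ergodic automorphism of $(\Om, P)$, and define $\cA^\om := (A^{\te^{-1}\om})^t$. Then $\cA^\om$ has rows indexed by states at $\om$ and columns indexed by states at $\tilde\te\om = \te^{-1}\om$, so $\{\cA^\om\}_{\om \in \Om}$ is a random matrix family over the base $(\Om, \tilde\te)$ in the sense of Section 2. The TMC generated by $\cA$ is the time-reversed version of the one defined by $A^t$, so it inherits topological mixing and the b.i.p.-property from hypothesis (i). Hypothesis (ii) delivers the required ratio bound $\sup\{\cA^\om_{ij}/\cA^\om_{ik}\} = \sup\{p^{\te^{-1}\om}_{ji}/p^{\te^{-1}\om}_{ki}\} < \infty$ on the relevant subset of $\Om$, and stochasticity of $A$ yields $\sum_i \cA^\om_{ij} = \sum_i p^{\te^{-1}\om}_{ji} = 1$, so the summability condition holds trivially with $m_\om = M_\om \equiv 1$.

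Then I would apply Corollary \ref{cor:pf} to $\cA$, obtaining a random eigenvalue $\tilde\la$ and positive random vectors $\tilde h, \tilde\mu$ with $(\tilde h^\om)^t \cA^\om = \tilde\la(\om)\tilde h^{\tilde\te\om}$, $\cA^\om \tilde\mu^{\tilde\te\om} = \tilde\la(\om)\tilde\mu^\om$, and $(\tilde h^\om)^t \tilde\mu^\om = 1$. Because the columns of $\cA^\om$ sum to $1$, the Ruelle operator for the locally constant potential $\phi^\om(x) := \log \cA^\om(x_0, x_1)$ satisfies $L^\om_\phi \mathbf{1} = \mathbf{1}$, so the constant function $\mathbf{1}$ is a positive, $1$-H\"older eigenfunction with eigenvalue $1$. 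Combined with the uniqueness of such eigenfunctions in Theorem \ref{theo:rpf} and the fact that $\tilde\mu_\om$ is a probability measure, this forces $\tilde h \equiv \mathbf{1}$ and $\tilde\la \equiv 1$.

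Setting $\pi := \tilde\mu$, the identity $\cA^\om \tilde\mu^{\tilde\te\om} = \tilde\mu^\om$ becomes $(A^{\te^{-1}\om})^t \pi^{\te^{-1}\om} = \pi^\om$; replacing $\om$ by $\te\om$ and transposing yields $\pi^\om A^\om = \pi^{\te\om}$, so $\pi$ is indeed a random stationary distribution. Strict positivity is inherited from the positivity of $\tilde\mu_\om$ on $1$-cylinders (Theorem \ref{theo:conformal}). For uniqueness, any other random stationary distribution would, via the same translation, produce a second right eigenvector of $\cA$ with eigenvalue $1$ that is a probability vector in every fiber, which contradicts the ergodicity of the $T$-invariant measure $d\pi \, dP$ asserted in Theorem \ref{theo:rpf} (iv).

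Finally, the convergence assertion is Theorem \ref{theo:rpf} (iii) rewritten in vector form. For $f^\om$ constant on $1$-cylinders with $\sum_i |f^\om_i| \pi^\om_i < \infty$, iterating the straightforward identity $L^\om_\phi f^\om = A^{\te^{-1}\om} f^\om$ gives $L^{\om,n}_\phi f^\om = A^{\te^{-n}\om} \cdots A^{\te^{-2}\om} A^{\te^{-1}\om} f^\om$ as a vector on states at $\tilde\te^n\om = \te^{-n}\om$. Substituting $\tilde\la = 1$, $\tilde\La_n = 1$, $\tilde h = \mathbf{1}$, and $\tilde\mu = \pi$ into the $L^1(\tilde\mu_{\tilde\te^n\om})$-convergence from Theorem \ref{theo:rpf} (iii) produces exactly the claimed weighted-absolute-value limit. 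The main obstacle is the careful bookkeeping of the b.i.p.-property and topological mixing across the time reversal, and pinning down $\tilde\la \equiv 1$ and $\tilde h \equiv \mathbf{1}$ by invoking the uniqueness clause of the relative Ruelle-Perron-Frobenius theorem.
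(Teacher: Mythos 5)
Your reduction has a genuine gap at its very first step: you apply Corollary \ref{cor:pf} to the time-reversed family $\cA^\om = (A^{\te^{-1}\om})^t$, claiming that it ``inherits topological mixing and the b.i.p.-property from hypothesis (i)''. But hypothesis (i) only supplies the \emph{big preimages} property for the reversed chain, not the full b.i.p.-property (big images \emph{and} big preimages), and Corollary \ref{cor:pf} -- via Theorem \ref{theo:rpf} and ultimately Theorem \ref{theo:ergodic} -- genuinely needs both halves: the big-image half is what converts divergence of $\sum_n A_n^\om$ into divergence of $\sum_n \cZ_n^\om$ (Lemma \ref{lem:partition_functions}(ii)). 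Nothing in the hypotheses gives you big images for the reversed chain, so the corollary you want to cite is simply not applicable under the stated assumptions. (Your summability check $m_\om = M_\om \equiv 1$ and the translation of hypothesis (ii) into the ratio condition are fine; the obstruction is purely the missing big-image half.)

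The paper's proof is designed precisely to route around this: since $A$ is stochastic, $L_\phi \mathbf{1} = \mathbf{1}$ and hence $\cZ_n^\om = L_\phi^{\om,n}(1)(\xi_{\te^n\om}) = 1$ for all $n$, so $\sum_n s^n \cZ_n^\om$ diverges at $s=1$ and the system is of divergence type \emph{without} invoking Theorem \ref{theo:ergodic}. Theorem \ref{theo:conformal} (which only needs divergence type, (H2), (S1--2) and finite pressure) then produces $\la$ and the conformal measure, and the big-preimage half alone, via Lemma \ref{lem:partition_functions}(i), gives $\cZ_n^\om(a)\, C_\om \geq \cZ_n^{\te^k\om} = 1$, which is what yields $P_G(\phi)=0$ and positive recurrence; only then does one conclude as in Corollary \ref{cor:pf}. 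A secondary, smaller issue: you appeal to ``the uniqueness of such eigenfunctions in Theorem \ref{theo:rpf}'' to force $\tilde h \equiv \mathbf{1}$ and $\tilde\la \equiv 1$, but Theorem \ref{theo:rpf} as stated asserts existence, not uniqueness; the identification $\la \equiv 1$, $h \equiv \mathbf{1}$ should instead come from $L_\phi\mathbf{1}=\mathbf{1}$ together with $P_G(\phi)=0$ and the convergence statement (iii). If you repair the first step by establishing divergence type directly from stochasticity, the rest of your bookkeeping (the transposition, the identification of $\pi$ with the eigenmeasure on $1$-cylinders, and the rewriting of (iii) as the weighted vector limit) goes through as you describe.
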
 

\begin{proof} By assumption, the signum of $A^t$ defines a system $((X,T),(\Om,\te^{-1}))$ which s  topological mixing and has the big preimages property. Moreover, as a consequence of (ii), the potential defined by $\phi^\om\arrowvert_{[ij]} := \log p^\om_{ji}$ is 1-Hölder continuous for $\om \in \te(\Om_{\textrm{\tiny bp}})$. 

Since $A$ is a stochastic matrix, it follows that the constant function $1$ is an eigenfunction of $L_\phi$, and hence $\cZ^\om_n = 1$ for all $n \in \bbN$ and a.e. $\om \in \Om$.  For $a \in \cW^1$, it follows from Lemma \ref{lem:partition_functions} (i) that there exists $C_\om >0$ such that $\cZ^\om_n(a) \cdot C_\om \geq 1$ for all $n \in J_\om(\Om_a)$ sufficiently large. 
Hence $P_G(\phi)=0$, $(X,T,\phi)$ is of divergence type and positive recurrent.
The assertion then follows using similar arguments as in the proof of Corollary \ref{cor:pf}.
\end{proof}

\begin{remark}  It follows from Remark \ref{rem:conformal}, with $\{\mu_\om\}$ referring to the invariant measure associated with $((X,T),(\Om,\te^{-1}))$ 
and for $v=(v_0 \ldots v_n) \in \cW^{n+1}_\om$,  that  
\[ \mu_\om([v]) = \pi^{\te^{-n}\om}_{v_n} p^{\te^{-n}\om}_{v_nv_{n-1}} \cdots  p^{\te^{-1}\om}_{v_1v_{0}} .\]
 Hence, $((X,T),(\Om,\te^{-1}))$ might be seen as the time reversal of a (probabilistic)  Markov chain in random environment. However, for a random stochastic vector $\{ \nu^\om \}$, it follows that $\{\nu^\om A^\om\}$ can be recovered from, for $i \in \cW_{\te\om}$,
\[ (\nu^\om A^\om)_i = \int_{[i]} \nu^\om \circ T^{\te \om} d\mu_{\te \om} = \int L_\phi^{\te\om}(\1_{[i]}) \nu^\om  d\mu_\om. \]
Hence, the above result is closely related to Problem 5.7 in \cite{Orey:1991}.
\end{remark}

\section*{Acknowledgements}
The author acknowledges support by \emph{Fundação para Ciência e a Tecnologia} through grant SFRH/BPD/39195/2007 and the \emph{Centro de Matemática da Universidade do Porto}.

\end{document}